\titleformat{\subsection}[runin]
{\bfseries} {\thesubsection{.}}{0.15cm}{}[.]
\titleformat{\subsubsection}[runin]
{\em}{\thesubsubsection{.}}{0.15cm}{}[.]
\newtheorem{theorem}{Theorem}[section]
\newtheorem{proposition}[theorem]{Proposition}
\newtheorem{lemma}[theorem]{Lemma}
\newtheorem{corollary}[theorem]{Corollary}
\theoremstyle{definition}
\newtheorem{definition}[theorem]{Definition}
\newtheorem{remark}[theorem]{Remark}
\newtheorem{problem}[theorem]{Problem}
\newtheorem{example}[theorem]{Example}
\numberwithin{equation}{section}
\numberwithin{figure}{section}
\newcommand\Ascr{\mathscr{A}}
\newcommand\Bscr{\mathscr{B}}
\newcommand\Cscr{\mathscr{C}}
\newcommand\Hscr{\mathscr{H}}
\newcommand\Oscr{\mathscr{O}}
\newcommand\C{\mathbb{C}}
\newcommand\CP{\mathbb{CP}}
\newcommand\N{\mathbb{N}}
\newcommand\R{\mathbb{R}}
\newcommand\igot{\mathfrak{i}}
\renewcommand\igot{\mathfrak{i}}
\renewcommand\imath{\igot}
\newcommand\hra{\hookrightarrow}
\newcommand\wt{\widetilde}
\newcommand\wh{\widehat}
\newcommand\di{\partial}
\newcommand\dibar{\overline\partial}
\newcommand\dist{\mathrm{dist}}
\newcommand\Id{\mathrm{Id}}
\def\dist{\mathrm{dist}}
\newcommand\Hi{H^\infty}
\newcommand\HiM{H^\infty_M}
\numberwithin{equation}{section}
\begin{document}

\fancyhead[LO]{Euclidean domains in complex manifolds}
\fancyhead[RE]{F.\ Forstneri{\v c}} 
\fancyhead[RO,LE]{\thepage}

\thispagestyle{empty}

%% Title
\vspace*{1cm}
\begin{center}
{\bf\LARGE Euclidean domains in complex manifolds}

\vspace*{0.5cm}

%% Authors
{\large\bf  Franc Forstneri{\v c}} 
\end{center}

\vspace*{1cm}

\begin{quote}
{\small
\noindent {\bf Abstract}\hspace*{0.1cm}
In this paper we find big Euclidean domains in complex manifolds.
We consider open neighbourhoods of sets $K\cup M$ in a complex manifold $X$, 
where $K$ is a compact holomorphically convex set in an open Stein neighbourhood, 
$M$ is an embedded Stein submanifold, and $K\cap M$ is compact $\Oscr(M)$-convex. 
We prove a Docquier--Grauert type theorem concerning biholomorphic equivalence of
neighbourhoods of such sets, and we give sufficient conditions for the existence of 
Stein neighbourhoods of $K\cup M$, biholomorphic to domains in $\C^n$ with $n=\dim X$, 
such that $M$ is mapped onto a closed complex submanifold of $\C^n$.
\vspace*{0.2cm}

\noindent{\bf Keywords}\hspace*{0.1cm} Stein manifold, Oka manifold, polynomially convex set

\vspace*{0.1cm}

\noindent{\bf MSC (2020):}\hspace*{0.1cm}} 
32E10; 32E20; 32E30; 32H02; 32Q56 %Secondary  

\vspace*{0.1cm}
\noindent{\bf Date: \rm September 13, 2021}

% 14R10 Affine spaces (automorphisms, embeddings, exotic structures, cancellation problem) 
% 14D21 Applications of vector bundles and moduli spaces in mathematical physics (twistor theory, instantons, quantum field theory) [See also 32L25, 81Txx] 
%
% 32C25 Analytic subsets and submanifolds 
% 32E10 Stein spaces, Stein manifolds
% 32E30 Holomorphic and polynomial approximation, Runge pairs, interpolation
% 32H02 Holomorphic mappings, (holomorphic) embeddings and related questions 
% 32J27  Compact Kähler manifolds: generalizations, classification 
% 32M12 Almost homogeneous manifolds and spaces [See also 14M17] 
% 32Q56  Oka principle and Oka manifolds (since 2020)
% 32S20: Global theory of singularities; cohomological properties
% 30E10  Approximation in the complex domain 
% 32M17 Automorphism groups of Cn and affine manifolds 
%
%  34D10 Perturbations of ordinary differential equations
%  37J55  Contact systems
%
%  53A10 Minimal surfaces, surfaces with prescribed mean curvature
%  53C25 Special Riemannian manifolds (Einstein, Sasakian, etc.) 
%  53C26 Hyper-Kähler and quaternionic Kähler geometry, "special'' geometry
%  53C28 Twistor methods [See also 32L25] 
%  53D10 Contact manifolds, general
%  53D35 Global theory of symplectic and contact manifolds
%  57R42 Immersions 
%  58D10 Spaces of imbeddings and immersions
%
\end{quote}

%
%
%    INTRODUCTION
%
%
\section{Introduction}\label{sec:intro}
Complex Euclidean spaces $\C^n$  are the most basic objects in analytic and algebraic geometry.
It is of major interest to understand which properties of a complex manifold make it a Euclidean space, 
or an open subset thereof. There are outstanding conjectures regarding the former question,    
{\em the recognition problem in complex analysis}. In this paper we consider the second question, 
which is of both theoretical interest and practical importance, especially since it is substantially easier 
to solve analytic problems on Euclidean spaces than on general complex manifolds. 
In particular, we prove the following result.

%
%   MAIN THEOREM
%
\begin{theorem}\label{th:main}
Assume that $X$ is a complex manifold, $M$ is a Stein submanifold of $X$ such that 
$TX|_M$ is a trivial bundle, $K$ is a compact set in $X$ such that $K\cap M$ is compact and
$\Oscr(M)$-convex, $\Omega_0\subset X$ is an open neighbourhood of $K$, and 
$\Phi_0:\Omega_0\stackrel{\cong}{\to} \Phi_0(\Omega_0) \subset\C^n$ with $n=\dim X$ 
is a biholomorphic map such that $\Phi_0(K)$ is polynomially convex in $\C^n$. 
If $\dim X \ge 2\dim M+1$ then for any $\epsilon>0$ there are a Stein neighbourhood $\Omega \subset X$ 
of $K\cup M$ and a biholomorphic map $\Phi:\Omega \stackrel{\cong}{\to} \Phi(\Omega) \subset \C^n$ 
such that $\sup_{x\in K}|\Phi(x)-\Phi_0(x)| <\epsilon$ and $\Phi(M)$ is a closed complex submanifold of $\C^n$.
If $\dim X=2\dim M$ then $\Phi$ can be chosen an immersion which is proper on $M$
and satisfies $\Phi(\Omega\setminus K)\subset \C^n\setminus \Phi(K)$.
\end{theorem}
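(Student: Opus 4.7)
The plan is to reduce the theorem to a gluing problem in $\C^n$. Set $m:=\dim M$ and $L:=K\cap M$. I would proceed in three steps: (i) construct a proper holomorphic embedding (or proper immersion, when $n=2m$) $\psi\colon M\to\C^n$ approximating $\Phi_0$ on $L$; (ii) using triviality of $TX|_M$, extend $\psi$ to a biholomorphism $\Psi$ from a Stein tubular neighbourhood $V\supset M$ in $X$ onto a tubular neighbourhood of $\psi(M)$ in $\C^n$; (iii) merge the biholomorphism $\Phi_0$ (defined near $K$) with $\Psi$ (defined on $V$) into a single biholomorphism $\Phi$ on a Stein neighbourhood $\Omega$ of $K\cup M$ by a Cartan-type nonlinear splitting. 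Steinness of $\Omega$ is automatic because $\Phi$ maps it onto a domain in $\C^n$.

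For step (i), the set $L$ is compact and $\Oscr(M)$-convex in the Stein manifold $M$, and $\Phi_0$ restricts to a holomorphic map from a neighbourhood of $L$ in $M$ into the polynomially convex compactum $\Phi_0(K)\subset\C^n$. Under the hypothesis $n\ge 2m+1$, the Eliashberg--Gromov/Sch\"urmann embedding theorem with approximation on a holomorphically convex compactum, a standard tool in modern Stein theory, produces a proper holomorphic embedding $\psi\colon M\to\C^n$ arbitrarily close to $\Phi_0$ on $L$. When $n=2m$ the analogous construction yields only a proper immersion, and a transversality argument allows one to arrange $\psi(M\setminus L)\cap\Phi_0(K)=\emptyset$, which corresponds to the weaker conclusion $\Phi(\Omega\setminus K)\subset\C^n\setminus\Phi(K)$.

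For step (ii), triviality of $TX|_M$ forces triviality of the normal bundle of $M$ in $X$, so the Docquier--Grauert tubular neighbourhood theorem for Stein submanifolds provides a biholomorphism of a Stein neighbourhood of $M$ in $X$ with a neighbourhood of $M\times\{0\}$ in $M\times\C^{n-m}$. Composing with a trivialization of the normal bundle of $\psi(M)\subset\C^n$ yields the desired $\Psi$, extending $\psi$ and close to $\Phi_0$ on a neighbourhood of $L$. For step (iii), the transition $\gamma:=\Psi\circ\Phi_0^{-1}$ is then a biholomorphism close to $\Id_{\C^n}$ defined near $\Phi_0(L)$, and I would solve the nonlinear Cousin problem $\gamma=B^{-1}\circ A$ with $A$ a biholomorphism near $\Phi_0(K)$ and $B$ a biholomorphism near $\psi(M)$, both close to the identity; setting $\Phi:=A\circ\Phi_0$ on a neighbourhood of $K$ and $\Phi:=B\circ\Psi$ on $V$ then gives a well-defined global biholomorphism.

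The main obstacle is the splitting in step (iii), because $\psi(M)$ is noncompact. The associated linearized problem, writing a holomorphic map near $\Phi_0(L)$ as the difference of holomorphic maps defined near $\Phi_0(K)$ and near $\psi(M)$, is solvable because $\Phi_0(K)$ is polynomially convex, $\psi(M)$ is a closed complex submanifold of $\C^n$, and $\psi(L)$ is $\Oscr(\psi(M))$-convex (being the biholomorphic image of the $\Oscr(M)$-convex set $L$). To upgrade this linear solution to a nonlinear splitting across the noncompact submanifold I would exhaust $M$ by $\Oscr(M)$-convex compacta $L\subset L_1\subset L_2\subset\cdots$ and run a Newton/Kolmogorov iteration on the Cartan pairs $(\Phi_0(K),\psi(L_j))$; geometric decrease of errors from step to step forces the products $A_j$ and $B_j$ to converge to biholomorphisms with the required proximity to the identity, delivering both the $\epsilon$-approximation on $K$ and the closedness of $\Phi(M)$ in $\C^n$.
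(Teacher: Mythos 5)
Your overall architecture (embed $M$ properly into $\C^n$ approximating $\Phi_0$ on $K\cap M$, then glue with $\Phi_0$ along a tubular neighbourhood) parallels the paper, but two essential points are missing or wrong. First, in step (i) you take $\psi$ from the Eliashberg--Gromov/Sch\"urmann theorem, which gives no control whatsoever of $\psi(M\setminus L)$ relative to the compact set $\Phi_0(K)$. This control is indispensable: if $\psi(M\setminus L)$ meets $\Phi_0(K)$ (which typically has nonempty interior, e.g.\ a closed ball), then no map glued from $A\circ\Phi_0$ near $K$ and $B\circ\Psi$ near $M$, with $A,B$ close to the identity, can be injective, since points of $M$ far from $K$ would land in the region already covered by the image of the $K$-side. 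The paper secures exactly the property $F(M\setminus K')\subset\C^n\setminus L$ via Theorem \ref{th:FR2014} (Forstneri\v c--Ritter), whose proof rests on the Oka property of complements of polynomially convex sets; your proposal never arranges this in the main case $n\ge 2m+1$, and in the borderline case $n=2m$ your ``transversality argument'' cannot work, because $\Phi_0(K)$ is a fat compactum of full real dimension and cannot be avoided by general position --- here too the Oka-theoretic avoidance theorem is the needed tool.

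Second, the gluing in steps (ii)--(iii) is far from automatic. The Docquier--Grauert theorem produces \emph{some} biholomorphism $\Psi$ of a neighbourhood of $M$ onto a neighbourhood of $\psi(M)$, but there is no reason for $\Psi$ to be close to $\Phi_0$ on a neighbourhood of $L$ \emph{in $X$}; for that one needs the normal-bundle isomorphism to agree with $d\Phi_0$ over $K\cap M$ (the hypothesis on $\Theta$ in Theorem \ref{th:tubular}) and then a genuine construction. The paper does not perform one global splitting over the noncompact overlap followed by an unspecified Newton iteration; it proves Theorem \ref{th:tubular} by induction over an exhaustion of $M$ by strongly pseudoconvex $\Oscr(M)$-convex domains, attaching convex bumps and handles at Morse critical points, approximating the normal-form data $(G',G'')$ by Grauert/Mergelyan theorems, and gluing at each compact step with the new splitting lemma with interpolation on $M$ (Theorem \ref{Acta:theorem4.1}). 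Your iteration over pairs $(\Phi_0(K),\psi(L_j))$ is only a sketch: it does not explain how the successive corrections stay defined and injective on a neighbourhood of all of $M$, how the interpolation $\Phi|_M=\psi$ is preserved, or how the topology of $M$ (critical points of the exhaustion) is handled. Also, your remark that Steinness of $\Omega$ is automatic ``because $\Phi$ maps it onto a domain in $\C^n$'' is false --- domains in $\C^n$, $n\ge2$, need not be Stein; in the paper Steinness comes from the construction and from Theorem \ref{th:Steinneighbourhoods}, which lets one shrink any neighbourhood of $K\cup M$ to a Stein one. Finally, your assertion that triviality of $TX|_M$ forces triviality of $\nu_{M,X}$ does hold here, but only by a stable-range argument valid because $n-m\ge m$ (cf.\ Corollary \ref{cor:stablenormabundle}); as the paper's Example \ref{ex:Forster} shows, it fails in low codimension, so it requires justification rather than assertion.
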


As usual, $\Oscr(X)$ denotes the algebra of holomorphic functions on a complex manifold $X$. 
The {\em holomorphic hull} of a compact set $K$ in $X$ is the set
\[ 
	\wh K_{\Oscr(X)} =\{x\in X: |f(x)|\le \max_K |f|\ \ \text{for all}\ f\in \Oscr(X)\}.
\] 
We say that $K$ is {\em $\Oscr(X)$-convex} (or holomorphically convex in $X$) 
if $K=\wh K_{\Oscr(X)}$. A holomorphically convex set in $\C^n$ is called {\em polynomially convex}. 
An analogous result holds with $\C^n$ replaced by a 
Stein manifold having the density property; see Theorem \ref{th:density}.

The theorem says in particular that $K\cup M$ admits a Euclidean Stein 
neighbourhood in $X$ which makes $M$ a proper submanifold of $\C^n$. 
It also gives a partial result on the open problem whether every Stein manifold $X$ 
of dimension $n>1$ with trivial tangent bundle admits a holomorphic immersion $X\to\C^n$; 
see \cite[p.\ 70]{Gromov1986} and \cite[Problem 9.13.3]{Forstneric2017E}. 

Theorem \ref{th:main} easily reduces to the case when $X$ is a Stein manifold, $M$ is closed in $X$, 
and $K$ is a compact $\Oscr(X)$-convex strongly pseudoconvex domain; see Lemma \ref{lem:admissible}. 
For such sets, holomorphic convexity is a stable property under $\Cscr^2$ deformations; 
see \cite[Corollary 1.4]{Forstneric1986PAMS}. Assuming that the boundary $bK$ intersects $M$ transversely, 
the same holds for $\Oscr(M)$-convexity of $K\cap M$. 
This makes the conditions in Theorem \ref{th:main} quite robust. 
In this case, there is a holomorphic map $\Phi:X\to\C^n$ which satisfies the conclusion of 
Theorem \ref{th:main} on a  neighbourhood of $K\cup M$ and also 
$\Phi(X\setminus K)  \subset \C^n\setminus \Phi(K)$ (see Theorem \ref{th:schlicht}).

Triviality of the restricted tangent bundle $TX|_M$ is clearly a necessary condition in Theorem \ref{th:main}. 
In the special case when $M$ is a complex curve (the image of an open Riemann surface), 
every holomorphic vector bundle over $M$ is trivial by the Oka--Grauert principle
(see Grauert \cite{Grauert1958MA} or \cite[Theorem 5.3.1]{Forstneric2017E}). 
However, for manifolds $M$ of dimension $\ge 2$ this is a nontrivial condition irrespectively 
of the codimension of $M$ in $X$. For example, if the tangent bundle $TM$ is nontrivial, 
then the direct sum of $TM$ and a trivial bundle $M\times\C^k$ of arbitrary rank is also nontrivial since 
we are in the stable range (see \cite[Theorem 8.3.8]{Forstneric2017E}), and hence $M\times \{0\}^k$ 
has no Euclidean neighbourhood in $X=M\times \C^k$. 

The proof of Theorem \ref{th:main} (see Section \ref{sec:proof}) involves three ingredients. 
The Oka--Grauert theory is used to deal with questions concerning the normal bundle 
of a Stein submanifold in a complex manifold (see Section \ref{sec:normalbundle}).
The second ingredient of independent interest
generalizes the Docquier--Grauert tubular neighbourhood theorem for Stein manifolds; 
see Theorem \ref{th:tubular}.  The third ingredient is a method from the papers 
\cite{DrinovecForstneric2010AJM,ForstnericRitter2014},  
which enables us to make the submanifold $\Phi(M)\subset \C^n$ proper in $\C^n$
and such that $\Phi(M\setminus K)\subset \C^n\setminus \Phi(K)$.

% We now describe the generalized Docquier--Grauert theorem alluded to above.
We shall consider configurations of the following kind. 

%
%   ADMISSIBLE PAIRS
%
\begin{definition}\label{def:admissible}
A pair $(K,M)$ of subsets of a complex manifold $X$ is an {\em admissible pair} if 
\begin{enumerate}[\rm (a)]
\item $K$ is compact and has a Stein neighbourhood $U\subset X$ such that  $K$ is $\Oscr(U)$-convex
(such $K$ is said to be {\em holomorphically convex}),
\item $M$ is a (not necessarily closed) embedded Stein submanifold of $X$, and 
\item $K\cap M$ is a compact $\Oscr(M)$-convex subset of $M$.
\end{enumerate}
\end{definition}

\begin{comment}
A compact set $K$ satisfying part (a) of the above definition is said to be 
{\em holomorphically convex}. Clearly, such a set has a basis of open Stein neighbourhoods,
so it is a Stein compact. The converse is not true; however, every Stein compact 
has a basis of compact holomorphically convex neighbourhoods, obtained as
$\Oscr(U)$-convex hulls of $K$ in Stein open neighbourhoods $U$ of $K$.
\end{comment}

Note that a pair $(K,M)$ in Theorem \ref{th:main} is admissible.
For later reference, we recall the following result from \cite[Theorem 1.2]{Forstneric2005AIF} 
(see also \cite[Theorem 3.2.1]{Forstneric2017E}); the case $K=\varnothing$
corresponds to Siu's theorem \cite{Siu1976}.

%
%   GENERALIZED SIU'S THEOREM
%
\begin{theorem}\label{th:Steinneighbourhoods}
If $(K,M)$ is an admissible pair in a complex manifold $X$, then $K\cup M$ has a basis of 
open Stein neighbourhoods $V \subset X$ such that $M$ is closed in $V$ 
and $K$ is $\Oscr(V)$-convex.
\end{theorem}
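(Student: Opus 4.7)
The plan is to exhaust $M$ by compact $\Oscr(M)$-convex sublevel sets, build a Stein neighbourhood of each compact admissible pair $(K,L_n)$ inductively, and take an increasing Runge union.

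First I would choose a smooth strongly plurisubharmonic exhaustion $\rho\in\Cscr^\infty(M)$ with $K\cap M\subset\{\rho<0\}$, which exists because $K\cap M$ is a compact $\Oscr(M)$-convex subset of the Stein manifold $M$. Set $L_n=\{\rho\le n\}$, so the $L_n$ are compact, $\Oscr(M)$-convex, and $L_n\nearrow M$. The heart of the argument is the compact case: for each $n$, the set $K\cup L_n$ admits a basis of open Stein neighbourhoods $W_n$ in $X$ with $K$ being $\Oscr(W_n)$-convex and $W_n\cap M\subset\mathrm{int}(L_{n+1})$. I would build such $W_n$ by combining the given Stein neighbourhood $U$ of $K$ with a Stein neighbourhood $\Omega_n$ of $L_n$ in $X$; the latter is furnished by Siu's theorem applied to the closed Stein piece $\{\rho\le n+1\}$ of $M$ inside a suitable open ambient set where that piece is closed (which exists since $M$ is embedded and that sublevel set is compact).

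To fuse $U$ and $\Omega_n$ into one Stein neighbourhood, construct a smooth strongly plurisubharmonic function $\sigma$ defined in an open set containing $K\cup L_n$ and nonpositive on $K\cup L_n$, by patching a strongly plurisubharmonic $\tau$ on $U$ with $K=\{\tau\le 0\}$ (available since $K$ is $\Oscr(U)$-convex) and a plurisubharmonic extension $\tilde{\rho}$ of $\rho-n$ into a tubular neighbourhood of $M$ in $X$; the two are combined by a Richberg-smoothed maximum. The $\Oscr(M)$-convexity of $K\cap M$ is essential here, as it lets us normalize so that $\tilde{\rho}$ is strictly negative on an open neighbourhood of $K\cap M$, ensuring that $\max(\tau,\tilde{\rho})$ agrees with $\tau$ in that region and therefore behaves coherently across the transition. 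Sublevel sets of $\sigma$ then give the candidates $W_n$; Steinness follows from Narasimhan's criterion, and $\Oscr(W_n)$-convexity of $K$ is automatic from its description as a sublevel set of the plurisubharmonic $\sigma$ on the Stein manifold $W_n$.

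Finally, arranging inductively $W_n\Subset W_{n+1}$ with $W_n$ Runge in $W_{n+1}$, I set $V=\bigcup_n W_n$. Then $V$ is Stein by Behnke-Stein, contains $K\cup M$, has $M$ closed in $V$ (any limit in $V$ of a sequence in $M$ eventually lies in some $W_n\cap M\subset L_{n+1}$, which is compact in $X$, hence in $M$), and $K$ is $\Oscr(V)$-convex by the Runge property of the nested exhaustion. The principal obstacle in the whole scheme is the plurisubharmonic patching in the compact case, and this is precisely the point where the hypothesis that $K\cap M$ is $\Oscr(M)$-convex cannot be dispensed with: without it one cannot synchronize the two plurisubharmonic models near the seam $K\cap M$ without enlarging the holomorphic convex hull of $K$.
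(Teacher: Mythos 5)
Your overall scheme (exhaust $M$ by compact $\Oscr(M)$-convex pieces, patch plurisubharmonic data near $K$ and near each piece, pass to the limit) is the natural one, but the decisive step is missing. Note first that the paper does not prove this theorem: it is recalled from \cite[Theorem 1.2]{Forstneric2005AIF} (see also \cite[Theorem 3.2.1]{Forstneric2017E}), the case $K=\varnothing$ being Siu's theorem \cite{Siu1976}. The genuine gap in your argument is the final limit step. You assert that one can arrange $W_n\Subset W_{n+1}$ with $W_n$ Runge in $W_{n+1}$ and then invoke Behnke--Stein, but your $W_n$ are produced by separate, independent patching constructions for each $n$: nothing guarantees that the later neighbourhood (which must be thin around $L_{n+1}$) contains the earlier one, and nothing whatsoever provides the Runge property. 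This is not a formality: in a general complex manifold an increasing union of Stein open sets need not be Stein (an example of Forn\ae ss gives an increasing sequence of Stein domains whose union is not even holomorphically convex), so some coherence condition of Runge type is indispensable, and securing it is exactly the hard part of the theorem. In effect one must construct the whole family of neighbourhoods at once, as sublevel sets of a single plurisubharmonic ``variable-thickness tube'' function on one neighbourhood of the noncompact set $K\cup M$ --- which is what the cited proof does, using local holomorphic retractions onto $M$ over compact pieces and a carefully patched global exhaustion; the Runge property of the exhausting pieces is then automatic. As written, your proof reintroduces at the limit step precisely the difficulty it was meant to dispose of.

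There are also gaps in the compact case, though these are repairable. The coherence you attribute to $\Oscr(M)$-convexity is located at the wrong seam: arranging $\tilde\rho<0$ near $K\cap M$ so that $\max(\tau,\tilde\rho)=\tau$ there is harmless but not the issue; what a regularized maximum actually requires is domination where one of the two functions ceases to be defined, i.e.\ the tube function must dominate $\tau$ near the part of the boundary of the chosen sublevel set of $\tau$ met by the tube, and vice versa where the tube leaves the region under consideration. This is exactly the constant-juggling carried out on $M$ in the paper's own Lemma \ref{lem:admissible}, and without it the patched $\sigma$ is not defined on a neighbourhood of $K\cup L_n$ at all. Moreover, Steinness of $W_n$ does not follow from ``Narasimhan's criterion'': your $\sigma$ is not an exhaustion of $\{\sigma<0\}$, and a relatively compact strongly pseudoconvex domain in an arbitrary complex manifold is only holomorphically convex (Grauert); one must in addition exclude compact analytic subsets of positive dimension (e.g.\ via the strong plurisubharmonicity of the normal-distance part of $\sigma$) or genuinely produce a strongly plurisubharmonic exhaustion. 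Finally, a minor point: Siu's theorem applies to closed Stein submanifolds, not to the compact set $\{\rho\le n+1\}$; apply it instead to the open sublevel set $\{\rho<n+2\}$, which is closed in a suitable open subset of $X$, or avoid Siu altogether for compact pieces, where local charts and a partition-of-unity extension suffice.
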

 
Given a complex submanifold $M$ of a complex manifold $X$, we denote by 
\[ 
	 \nu_{M,X} = TX|_M / TM
\] 
the holomorphic normal bundle of $M$ in $X$. 
More generally, a holomorphic immersion $f:M\to X$ induces a holomorphic vector bundle embedding
$\iota:TM\hra f^*TX$ by $\iota_x(v)=f^*(df_x(v))$ for $x\in M$ and $v\in T_x M$,
and the normal bundle of the immersion is 
\[
	\nu_f = f^*TX/\iota(TM).
\]
(Here, $f^*$ denotes the pullback).
Assume now that $M$ is a Stein submanifold of $X$. Then, the normal bundle
$\nu_{M,X}$ embeds as a holomorphic vector subbundle of $TX|_M$ such that 
$TX|_M=TM\oplus\nu_{M,X}$ (see \cite[Corollary 2.6.6]{Forstneric2017E}).
Furthermore, a theorem of Docquier and Grauert \cite{DocquierGrauert1960} 
(see also \cite[Theorem 3.3.3]{Forstneric2017E}) says that the natural inclusion of $M$ onto the zero section 
of the normal bundle $\nu_{M,X}$ extends to a biholomorphic map from an open neighbourhood of 
$M$ in $X$ onto an open neighbourhood of the zero section in $\nu_{M,X}$. 
(The extra assumption in \cite{DocquierGrauert1960} that the manifold $X$ be Stein
is unnecessary in view of Theorem \ref{th:Steinneighbourhoods}. 
See \cite[Chapter 3]{Forstneric2017E} for more on this subject.) 
Similarly, an immersion $f:M\to X$ from a Stein manifold $M$ extends
to a holomorphic immersion $F:\Omega\to X$ from a neighbourhood $\Omega\subset \nu_f$ of the
zero section of $\nu_f$ (which we identify with $M$) into $Y$.

The following result generalizes the Docquier--Grauert tubular neighbourhood theorem
to admissible pairs. This provides an important ingredient in the proof of Theorem \ref{th:main}. 

%
%    GENERALIZED TUBULAR NEIGHBOURHOOD THEOREM
%
\begin{theorem}%[Generalized tubular neighbourhood theorem for Stein manifolds] 
\label{th:tubular}
Let $(K,M)$ be an admissible pair in a complex manifold $X$ (see Definition \ref{def:admissible}). 
Assume that $Y$ is a complex manifold with $\dim Y=\dim X$, $\Omega_0\subset X$ is an open 
neighbourhood of $K$, and $\Phi_0 : \Omega_0 \cup M\to Y$ is a such that 
$\Phi_0|_{\Omega_0}:\Omega_0 \to \Phi_0(\Omega_0) \subset Y$ is
a biholomorphism and $f:=\Phi_0|_M:M\to Y$ is a holomorphic immersion.
Assume that there is a topological isomorphism $\Theta: \nu_{M,X}\to \nu_{f}$ 
of the normal bundles which is given by the differential of $\Phi_0$ over $K\cap M$. 
Given $\epsilon>0$ there are an open Stein neighbourhood 
$\Omega \subset X$ of $K\cup M$ and a holomorphic immersion 
$\Phi:\Omega \to Y$ such that 
\[
	\Phi|_M=f=\Phi_0|_M
	\quad \text{and}\quad
	\sup_{x\in K} \dist_Y\left(\Phi(x),\Phi_0(x)\right) <\epsilon.
\]
If in addition the map $\Phi_0 : \Omega_0 \cup M\to Y$ is injective and $f:=\Phi_0|_M:M\to N:=f(M)\subset Y$ 
is a holomorphic embedding, then $\Phi:\Omega\to \Phi(\Omega)\subset Y$ can be chosen biholomorphic. 
\end{theorem}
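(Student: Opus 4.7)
The plan is to reduce to a Stein situation, upgrade the topological normal-bundle data to holomorphic data, build a local immersion extension of $f$ in a neighbourhood of $M$, and finally glue this extension to $\Phi_0$ by a nonlinear Cartan splitting that is trivial along $M$.

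First, by Theorem \ref{th:Steinneighbourhoods} I may replace $X$ by an open Stein neighbourhood $V$ of $K\cup M$ (shrinking $\Omega_0$ into $V$) in which $K$ is $\Oscr(V)$-convex and $M$ is closed. Since $M$ is Stein, the Oka--Grauert principle for holomorphic vector bundles provides a holomorphic isomorphism $\tilde\Theta:\nu_{M,X}\to\nu_f$ in the homotopy class of $\Theta$; using the relative form over the $\Oscr(M)$-convex set $K\cap M$, I arrange that $\tilde\Theta$ approximates the normal component of $d\Phi_0$ uniformly there. Combining the Docquier--Grauert tubular neighbourhood theorem applied to $M\hookrightarrow X$ with its immersion-extension version applied to $f:M\to Y$, and identifying the two normal bundles via $\tilde\Theta$, I obtain an open neighbourhood $W\subset V$ of $M$ and a holomorphic immersion $F:W\to Y$ with $F|_M=f$. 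In the embedding case (when $\Phi_0$ is injective on $\Omega_0\cup M$ and $f(M)$ is closed in $Y$), $W$ can be shrunk so that $F$ is injective.

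Because $\Phi_0|_M=f=F|_M$, the holomorphic transition $\gamma=\Phi_0^{-1}\circ F$ is defined on some neighbourhood $U\subset\Omega_0\cap W$ of $K\cap M$ and satisfies $\gamma|_{K\cap M}=\Id$; after shrinking $U$, I may take $\gamma$ uniformly close to the identity. Exhausting $M$ by an increasing sequence of $\Oscr(M)$-convex compacts $M_1\subset M_2\subset\cdots$, I note that each $K\cup M_j$ admits a basis of Stein neighbourhoods in $V$ (again by Theorem \ref{th:Steinneighbourhoods}) and, after small enlargements, $(K,M_j)$ is a Cartan pair inside $V$. Applying the nonlinear splitting lemma for biholomorphic maps close to the identity on Cartan pairs (cf.\ Chapter~5 of \cite{Forstneric2017E}) inductively in $j$ and telescoping with geometrically decreasing error bounds produces biholomorphisms $\alpha$ on a neighbourhood of $K$ and $\beta$ on a neighbourhood of all of $M$, both uniformly close to the identity and with $\beta|_M=\Id$, satisfying $\gamma=\alpha\circ\beta^{-1}$ on a neighbourhood of $K\cap M$. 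Setting
\[
   \Phi=\Phi_0\circ\alpha\quad\text{near }K,\qquad \Phi=F\circ\beta\quad\text{near }M,
\]
gives a well-defined holomorphic immersion $\Phi:\Omega\to Y$ on some open neighbourhood $\Omega\supset K\cup M$ with $\Phi|_M=f$ (because $\beta$ fixes $M$) and $\sup_{x\in K}\dist_Y(\Phi(x),\Phi_0(x))<\epsilon$ (because $\alpha$ is close to the identity). In the embedding case, $F$ was chosen injective and $\Phi$ is locally biholomorphic everywhere and injective on the full set $K\cup M$ (since $\Phi_0\cup f$ is injective there), hence injective on a slightly shrunk $\Omega$ by a standard openness argument.

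The crux of the argument, and the expected main obstacle, is arranging $\beta|_M=\Id$ in the splitting step, because this is what forces $\Phi|_M=f$ to hold exactly rather than merely approximately. This requires a relative form of the Cartan splitting lemma that exploits the vanishing of $\gamma-\Id$ along the $\Oscr(M)$-convex set $K\cap M$ and the Stein property of $M$, so that the entire correction encoded by $\gamma$ can be absorbed into the $K$-side factor $\alpha$ while $\beta$ remains tangent to the identity along the closed Stein submanifold $M$.
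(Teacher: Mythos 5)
Your reduction to the Stein case, the upgrade of $\Theta$ to a holomorphic isomorphism approximating the normal part of $d\Phi_0$ over $K\cap M$ (via Theorem \ref{th:maxrank}), and the Docquier--Grauert construction of a holomorphic immersion $F$ on a tube around $M$ with $F|_M=f$ are fine; this much is essentially Proposition \ref{prop:stablenormalbundle}. The gap is in the gluing step, in two respects. First, the pairs you propose --- a neighbourhood of $K$ against $M_j$ (or a tube around it), overlapping near $K\cap M$ --- are not Cartan pairs: the separation condition $\overline{A\setminus B}\cap\overline{B\setminus A}=\varnothing$ of Definition \ref{def:Cartan-pair} fails already for $K=\overline{\mathbb{B}^2}\subset\C^2$, $M=\{z_2=0\}$, $M_j=\{|z_1|\le 2,\ z_2=0\}$, where both closures contain the circle $\{|z_1|=1,\ z_2=0\}$; no small enlargement repairs this, because the failure happens exactly where $M$ exits $K$. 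Hence Theorem \ref{Acta:theorem4.1} does not apply to your configuration. The paper avoids this by always placing the part of $M$ already covered on the $A$-side ($A_i=K'\cup L_i$, which is $\Oscr(X)$-convex by Lemma \ref{lem:hc}) and attaching only a small convex bump $B_i\subset M$ on the $B$-side.

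Second, and more fundamentally, even with a legitimate Cartan pair you cannot arrange the hypothesis $\dist_{\wt C}(\gamma,\Id)<\epsilon_0$ of Theorem \ref{Acta:theorem4.1}. Your $F$ agrees with $\Phi_0$ only to first order along $K\cap M$ (and only approximately in the normal directions), so $\gamma=\Phi_0^{-1}\circ F$ becomes uniformly close to the identity only after shrinking the overlap; but $\epsilon_0$ and $c_0$ are attached to a fixed pair $(A,B)$ and a fixed $\wt C$, and shrinking the overlap changes the pair and its threshold, so the two requirements chase each other and nothing guarantees they can be met simultaneously. The same difficulty recurs in your telescoping over $j$: the discrepancy entering step $j+1$ is whatever came out of step $j$ and cannot be made smaller at will, while the splitting lemma only yields $\beta$ on a neighbourhood of the compact set $B$, not of all of the noncompact $M$ as you assert. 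This is precisely why the paper's proof is an induction over a strongly plurisubharmonic exhaustion of $M$: at each noncritical step the map over the new bump is produced by Grauert approximation of the normal-form data $(G',G'')$ in \eqref{eq:F} (and by Mergelyan approximation at critical points, which is where $\Theta$ is actually needed to extend across the handle), so the transition map can be made as close to the identity as the fixed Cartan pair of that particular step requires before Theorem \ref{Acta:theorem4.1} is invoked. Without such an approximation mechanism at each stage, your one-shot (or telescoped) gluing of $\Phi_0$ with a globally constructed $F$ does not close.
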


\vspace{-4mm}
\[
	\xymatrix{   \nu_{M,X} \ar[d] \ar[r]^{\Theta}  & \nu_{N,Y} \ar[d]  \\
				   M          \ar[r]^{f}      & N
	}			   
\]
\vspace{2mm}

The classical Docquier--Grauert theorem 
(cf.\ \cite{DocquierGrauert1960} or \cite[Theorem 3.3.3]{Forstneric2017E}) 
corresponds to the special case of Theorem \ref{th:tubular} with $K=\varnothing$.
Theorem \ref{th:tubular} is proved in Section \ref{sec:proof-tubular}. An important ingredient 
in the proof is a new version of the splitting lemma for biholomorphic maps 
close to the identity on a Cartan pair (cf.\ \cite[Theorem 9.7.1]{Forstneric2017E}) 
with interpolation on a complex submanifold; see Theorem \ref{Acta:theorem4.1}
and Remark \ref{rem:variety}. 

Combining Theorem \ref{th:tubular} with the proper holomorphic embedding theorem 
for Stein manifolds, due to Eliashberg and Gromov \cite{EliashbergGromov1992AM} 
and Sch{\"u}rmann \cite{Schurmann1997} (see also \cite[Theorem 9.3.1]{Forstneric2017E}), 
gives the following analogue of Theorem \ref{th:main} in which $K=\varnothing$ 
and $M$ may have smaller codimension in $X$. See  Section \ref{sec:normalbundle} for the details. 

%
%  EUCLIDEAN NEIGHBOURHOODS OF STEIN SUBMANIFOLDS
%
\begin{theorem} \label{th:Kempty}
Assume that $M$ is a Stein submanifold of a complex manifold $X$ 
such that the restricted tangent bundle $TX|_M$ is trivial.
\begin{enumerate}[\rm (a)]  
\item If $m:=\dim M\ge 2$ and $n:=\dim X \ge \left[\frac{3m}{2}\right]+1\ge 4$,  
then there are a Stein neighbourhood $\Omega\subset X$ of $M$ and a biholomorphic map 
$\Phi:\Omega \stackrel{\cong}{\to} \Phi(\Omega) \subset \C^n$ 
such that $\Phi(M)$ is a closed complex submanifold of $\C^n$.
\vspace{1mm}
\item If $m\ge 1$ and $n \ge \left[\frac{3m +1}{2}\right]\ge 2$, then there are a Stein neighbourhood 
$\Omega\subset X$ of $M$ and a holomorphic immersion $\Phi:\Omega \to \C^n$ 
such that $\Phi|_M:M\to\C^n$ is proper.
\end{enumerate}
\end{theorem}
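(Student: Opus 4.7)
The plan is to apply Theorem~\ref{th:tubular} to the (degenerate) admissible pair $(\varnothing,M)$ in $X$, with $Y=\C^n$ and $\Phi_0:M\to\C^n$ supplied by the Eliashberg--Gromov--Sch\"urmann proper embedding/immersion theorem. Specifically, I would first invoke \cite[Theorem 9.3.1]{Forstneric2017E}: under the hypotheses of part (a) there is a proper holomorphic embedding $f:M\to\C^n$, and under those of part (b) a proper holomorphic immersion $f:M\to\C^n$. Since $K=\varnothing$ forces $\Omega_0=\varnothing$ to be an admissible choice, the compatibility of $\Theta$ with $d\Phi_0$ over $K\cap M$ becomes vacuous, and the only nontrivial hypothesis of Theorem~\ref{th:tubular} left to check is the existence of a topological isomorphism of normal bundles $\Theta:\nu_{M,X}\to\nu_f$.

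To produce $\Theta$, I would use the two short exact sequences of holomorphic vector bundles over $M$,
\[
0\to TM\to TX|_M\to \nu_{M,X}\to 0, \qquad 0\to TM\to f^*T\C^n\to \nu_f\to 0,
\]
whose middle terms are both trivial of rank $n$ (by hypothesis for the first, tautologically for the second). Since $M$ is Stein, both split holomorphically (\cite[Corollary 2.6.6]{Forstneric2017E}), giving $TM\oplus\nu_{M,X}\cong M\times\C^n\cong TM\oplus\nu_f$. Hence $\nu_{M,X}$ and $\nu_f$ are stably isomorphic as holomorphic, and in particular topological, vector bundles. By Andreotti--Frankel, $M$ has the homotopy type of a CW complex of real dimension at most $m$, so stably isomorphic complex vector bundles of common rank $r$ are isomorphic once $2r\ge m$: the obstructions to a unique lifting through $BU(r)\to BU$ live in $H^{>2r}(M)$, which vanishes in this range (cf.\ \cite[Theorem 8.3.8]{Forstneric2017E}). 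The dimension inequalities are tuned precisely to this range: in part (a), $r=n-m\ge[m/2]+1$ gives $2r>m$; in part (b), $r=n-m\ge\lceil m/2\rceil$ gives $2r\ge m$. This yields the topological isomorphism $\Theta$.

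Theorem~\ref{th:tubular} now produces an open Stein neighbourhood $\Omega\subset X$ of $M$ and a holomorphic immersion $\Phi:\Omega\to\C^n$ with $\Phi|_M=f$. In part (a), $f$ is an injective holomorphic embedding onto the closed complex submanifold $f(M)\subset\C^n$, so by the last clause of Theorem~\ref{th:tubular} one may take $\Phi$ biholomorphic onto $\Phi(\Omega)$; properness of $f$ then forces $\Phi(M)=f(M)$ to be closed in $\C^n$. In part (b) it is immediate that $\Phi|_M=f$ is proper, which is the required conclusion.

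The main obstacle, in my view, is the stable-range step: part (b) with $m$ even sits exactly at the boundary $2r=m$, and one has to use the Andreotti--Frankel dimension reduction (real CW-dimension $\le m$, not $2m$) together with the unique-lifting obstruction analysis in this borderline regime. The remaining steps are essentially bookkeeping once Theorem~\ref{th:tubular} is granted.
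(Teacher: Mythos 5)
Your argument is correct, and its skeleton coincides with the paper's: the Eliashberg--Gromov--Sch\"urmann theorem supplies the proper embedding (resp.\ immersion) $f:M\to\C^n$, a comparison of normal bundles supplies the hypothesis of a tubular-neighbourhood theorem, and that theorem produces $\Omega$ and $\Phi$. Where you diverge is in the middle step. The paper routes this through Proposition \ref{prop:stablenormalbundle}, i.e.\ Corollary \ref{cor:stablenormabundle}, which obtains a \emph{holomorphic} isomorphism $\nu_{M,X}\cong\nu_f$ by isotoping holomorphic bundle embeddings $TM\hra M\times\C^n$ via the Oka--Grauert maximal-rank theorem (Theorem \ref{th:maxrank}, applied over $M\times\C$, whence the bound $n\ge\left[\frac{3m+1}{2}\right]$), and then applies the classical Docquier--Grauert theorem. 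You instead split the two exact sequences (legitimate, since $M$ is Stein), observe that $\nu_{M,X}$ and $\nu_f$ are stably isomorphic, and destabilize topologically using Andreotti--Frankel together with the $(2r+1)$-connectivity of $BU(r)\to BU$; your rank counts in both parts, including the borderline case $2r=m$ for $m$ even in (b), are right, and a topological isomorphism is all that Theorem \ref{th:tubular} requires (with $K=\varnothing$ it reduces to Docquier--Grauert, as the paper notes, so you are not invoking more than the paper does). The trade-off: your route is topologically more hands-on and bypasses the holomorphic isotopy machinery of Section \ref{sec:normalbundle}, whereas the paper's route keeps the normal-bundle comparison holomorphic from the start via Oka--Grauert; over a Stein base the two are equivalent (a topological isomorphism upgrades to a holomorphic one by Grauert), and both yield exactly the stated dimension thresholds.
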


By an example of Forster \cite{Forster1970} (see Example \ref{ex:Forster}),
Theorem \ref{th:Kempty} is dimensionwise sharp for a general manifold $M$ of the given dimension, 
except perhaps when $M$ is an open Riemann surface embedded in a complex surface $X$. 
In the latter case, the answer to part (a) of the theorem hinges upon the classical open problem 
(the Forster--Bell--Narasimhan Conjecture) 
whether every open Riemann surface embeds properly holomorphically into $\C^2$.
See \cite[Sections 9.10--9.11]{Forstneric2017E} for a the survey of this topic. 

We do not see a comparably simple proof of Theorem \ref{th:main} 
in the presence of a nonempty compact set $K\subset X$. The following remains an open problem.

%
%     PROBLEM
%
\begin{problem}\label{prob:Knotempty}
Assuming that the set $K$ in Theorem \ref{th:main} is nonempty, does the conclusion hold under the weaker 
assumption $\dim X \ge \left[\frac{3\dim M}{2}\right]+1\ge 4$ in Theorem \ref{th:Kempty}?
\end{problem}

%%%%%%%%%%%%%%%%%%
%
%    SECTION 2
%
%%%%%%%%%%%%%%%%%%
\section{Euclidean neighbourhoods of Stein submanifolds}\label{sec:normalbundle}

In this section we recall some results of Oka--Grauert theory which will be used, 
and we prove Theorem \ref{th:Kempty}. 
Among the references for Oka--Grauert theory, see Grauert \cite{Grauert1958MA} and the surveys 
by Leiterer \cite{Leiterer1986} and the author \cite[Chapters 5 and 8]{Forstneric2017E}.
All complex spaces in this paper are assumed to be reduced.

Recall that any complex vector bundle over a Stein space $X$ has a compatible structure of a 
holomorphic vector bundle, and any topological complex vector bundle isomorphism between 
a pair of holomorphic vector bundles over $X$ is isotopic to a holomorphic vector bundle isomorphism.
In fact, essentially any problem concerning holomorphic vector bundles over Stein spaces
reduces to the corresponding topological problem for the underlying complex vector bundles.

Let  $\pi : E\to X$ and $\pi': E'\to X$ be complex vector bundles of rank $r$ and $r'$, 
respectively. A complex vector bundle map $\Phi:E\to E'$ is said to be 
{\em of maximal rank} if for each $x\in X$ the $\C$-linear map $\Phi_x:E_x\to E'_x$
is of maximal rank equal to $\min\{r,r'\}$. If $r=r'$ then such a map is a complex vector bundle isomorphism, 
for $r<r'$ it is a complex vector bundle embedding, and for $r>r'$ it is a complex vector bundle
epimorphism. We shall need the following result; see \cite[Theorem 8.3.1]{Forstneric2017E}.

%
%
%   EXISTENCE OF COMPLEX (HOLOMORPHIC) VECTOR BUNDLE MAPS OF MAXIMAL RANK
%
%
\begin{theorem} \label{th:maxrank}
Let $\pi : E\to X$ and $\pi': E'\to X$ be holomorphic vector bundles of rank $r$ and $r'$, 
respectively, over a reduced Stein space $X$. 
If $K$ is a compact $\Oscr(X)$-convex subset of $X$ and  $X'$ is 
a closed complex subvariety of $X$, then the following hold.
\begin{enumerate}[\rm (a)]
\item A topological complex vector bundle map $\Theta: E\to E'$ of maximal rank is homotopic
through maps of the same type to a holomorphic vector bundle map. 
If $\Theta$ is holomorphic over $X'$ and on a neighbourhood of $K$, 
then the homotopy can be chosen fixed on $X'$, holomorphic on a neighbourhood of $K$, 
and uniformly close to $\Theta$ on $K$.
\vspace{1mm}
\item 
If $|r-r'|\ge \left[ \frac{\dim X}{2}\right]$ then there exists a holomorphic vector bundle map $\Theta:E\to E'$ of
maximal rank. Furthermore, given a holomorphic vector bundle map $\Theta_0: E\to E'$ of maximal rank
over a neighbourhood of $K$ and over the subvariety $X'$, we can choose $\Theta$ as above such that it agrees
with $\Theta_0$ over $X'$ and it approximates $\Theta_0$ over $K$.
\end{enumerate}
\end{theorem}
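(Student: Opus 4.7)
The plan is to recast both parts of the theorem as statements about sections of a holomorphic fiber bundle whose fiber is an Oka manifold, and then to feed the problem into the Oka--Grauert machinery plus classical obstruction theory.

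First I would observe that complex vector bundle maps $\Theta:E\to E'$ correspond to sections of the vector bundle $\mathrm{Hom}(E,E')\to X$, and that the ones of maximal rank are exactly the sections of an open subbundle $\Hscr := \mathrm{Hom}_{\max}(E,E')\to X$. Its fiber over $x\in X$ is the space of maximal-rank $\C$-linear maps $\C^r\to \C^{r'}$: a copy of $GL_r(\C)$ if $r=r'$, the complex Stiefel manifold $V_r(\C^{r'})\cong GL_{r'}(\C)/P$ of injective maps if $r<r'$, or the dual Stiefel manifold of surjective maps if $r>r'$. In all cases the fiber is a complex homogeneous space under the transitive action of $GL_r(\C)\times GL_{r'}(\C)$, hence is an Oka manifold, and $\Hscr$ is a holomorphic fiber bundle with Oka fiber associated to the principal $GL_r\times GL_{r'}$-bundle $\mathrm{Fr}(E)\times_X \mathrm{Fr}(E')$.

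For part (a), this identifies the statement with the parametric Oka principle for sections of fiber bundles with Oka fibers over reduced Stein spaces, with approximation on an $\Oscr(X)$-convex compact set $K$ and interpolation on a closed subvariety $X'$. I would apply the version of this principle that is the main theorem of Oka--Grauert theory in the form stated in Chapter~5 of \cite{Forstneric2017E}: a continuous section of $\Hscr$ that is holomorphic on a neighbourhood of $K$ and on $X'$ is homotopic, through sections of the same type, to a holomorphic section, with the homotopy fixed on $X'$ and uniformly close to the original section on $K$. Translating this back gives precisely the conclusion of (a).

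For part (b), the task reduces, by part (a), to constructing a topological maximal-rank bundle map. A reduced Stein space of complex dimension $n$ has the homotopy type of a CW complex of real dimension at most $n$ (Andreotti--Frankel, Hamm), so obstructions to extending a continuous section of a fiber bundle from the subcomplex $K\cup X'$ to $X$ lie in cohomology groups $H^{k+1}(X,K\cup X';\pi_k(F))$ with $k\le n-1$. For the Stiefel manifold $V_r(\C^{r'})\cong U(r')/U(r'-r)$ one has $\pi_k(V_r(\C^{r'}))=0$ for $k\le 2(r'-r)$, and symmetrically in the surjective case, so the fiber is $2|r-r'|$-connected. The hypothesis $|r-r'|\ge [\dim X/2]$ gives $2|r-r'|\ge n-1$, which kills all obstructions and produces the desired topological extension; part (a) then promotes it to the required holomorphic bundle map agreeing with $\Theta_0$ on $X'$ and approximating it on $K$.

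The main technical point to get right is the precise connectivity bound for the complex Stiefel manifolds and its exact matching with the dimension hypothesis $|r-r'|\ge [\dim X/2]$; once this is in place, both parts become essentially formal consequences of the Oka principle for sections of fiber bundles with Oka fibers, combined with the fact that Stein spaces satisfy the expected obstruction-theoretic dimension bound.
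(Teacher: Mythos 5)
Your proposal is correct and follows essentially the route of the paper's source: the paper gives no proof of this theorem but cites \cite[Theorem 8.3.1]{Forstneric2017E}, whose proof is exactly your reduction to sections of the fiber bundle of maximal-rank homomorphisms (a holomorphic bundle with Oka homogeneous fiber), the Oka principle with approximation on $K$ and interpolation on $X'$, and the obstruction-theoretic existence step using the $2|r-r'|$-connectivity of the complex Stiefel fibers against the bound $2\left[\dim X/2\right]\ge \dim X-1$.
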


Item (a) in Theorem \ref{th:maxrank} corresponds to items (a) and (b) in 
\cite[Theorem 8.3.1]{Forstneric2017E}, and the interpolation in item (b) is 
part (c) of the cited theorem. The approximation statement in (b) follows 
from the proof given in \cite[p.\ 361]{Forstneric2017E}, which depends on  
\cite[Corollary 5.14.3]{Forstneric2017E}.

Part 1 of the following corollary is an immediate application of Theorem \ref{th:maxrank} (b).
Part 2 follows by taking $X=M\times \C$, $X'=M\times \{0,1\}$,  
$E=\pi^*TM$ where $\pi:X=M\times \C \to M$ is the projection onto the first factor, 
and $E'=\pi^*(F) \to X$. 

%
%  STABLE NORMAL BUNDLE
%
\begin{corollary}\label{cor:isotopicembeddings}
Let $M$ be a Stein manifold and $F\to M$ be a holomorphic vector bundle.
\begin{enumerate}[\rm 1.]
\item If $\mathrm{rank}\, F \ge \left[\frac{3\dim M}{2}\right]$ then there exists a holomorphic vector bundle 
embedding $\iota:TM\hra F$. Furthermore, if $K$ is a compact $\Oscr(M)$-convex subset of $M$, $U\subset M$ 
is an open neighbourhood of $K$, and $M'$ is a closed complex subvariety of $M$, 
then for any holomorphic vector bundle embedding $\iota_0:TM|_{U\cup M'} \hra F|_{U\cup M'}$ 
there is a global holomorphic vector bundle embedding $\iota:TM\hra F$ which agrees with 
$\iota_0$ over $M'$ and approximates $\iota_0$ as closely as desired over $K$.
\vspace{1mm}
\item If $\mathrm{rank}\, F \ge \left[\frac{3\dim M+1}{2}\right]$ then any two holomorphic vector bundle 
embeddings $\iota_0,\iota_1: TM\hra F$ are isotopic through a family of holomorphic vector bundle 
embeddings $\iota_t: TM\hra F$ with $t\in \C$. Hence, the quotient bundles $F/\iota_t(TM)$ for $t\in \C$ 
are pairwise isomorphic holomorphic vector bundles. 
\end{enumerate}
\end{corollary}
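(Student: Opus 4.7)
The plan for Part 1 is to apply Theorem \ref{th:maxrank}(b) directly with $X = M$, $E = TM$, $E' = F$, and subvariety $X' := M'$. Writing $m := \dim M$, $r := \mathrm{rank}\, TM = m$, and $r' := \mathrm{rank}\, F$, the hypothesis $r' \ge \left[\frac{3m}{2}\right]$ rewrites as $|r-r'| = r' - r \ge \left[\frac{m}{2}\right] = \left[\frac{\dim X}{2}\right]$, so (b) applies. Taking $\Theta_0 := \iota_0$ as input (a holomorphic vector bundle map of maximal rank on the neighbourhood $U$ of $K$ and on $M'$), the theorem produces a global holomorphic vector bundle map $\iota : TM \to F$ of maximal rank agreeing with $\iota_0$ over $M'$ and approximating $\iota_0$ uniformly on $K$. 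Because $r' > r$, a fibrewise $\C$-linear map of maximal rank is injective, so $\iota$ is the required embedding. When no auxiliary data $U, M'$ is prescribed, the existence part of (b) still applies.

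For Part 2, I would parametrize a holomorphic isotopy by $t \in \C$. Set $X := M \times \C$ (Stein as a product of Stein manifolds), $\pi : X \to M$ the projection onto the first factor, $E := \pi^* TM$, $E' := \pi^* F$, and $X' := M \times \{0,1\}$, a closed complex subvariety of $X$. The two given embeddings $\iota_0, \iota_1 : TM \hookrightarrow F$ assemble into a holomorphic vector bundle map $\Theta_0 : E|_{X'} \to E'|_{X'}$ of maximal rank. Now $\dim X = m+1$ and $|\mathrm{rank}\, E - \mathrm{rank}\, E'| = r' - m \ge \left[\frac{m+1}{2}\right] = \left[\frac{\dim X}{2}\right]$ under the hypothesis of Part 2, so Theorem \ref{th:maxrank}(b) (applied with $K = \varnothing$) furnishes a holomorphic vector bundle map $\Theta : E \to E'$ of maximal rank extending $\Theta_0$. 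Since $r' > m$, this $\Theta$ is fibrewise injective, hence an embedding. Identifying $M \times \{t\}$ with $M$ and restricting $\Theta$ over each slice yields the desired holomorphic family of embeddings $\iota_t : TM \hookrightarrow F$ with the prescribed values at $t = 0, 1$.

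For the last assertion, consider the holomorphic vector bundle $G := E'/\Theta(E)$ on $X = M \times \C$; its restriction to $M \times \{t\}$ is $F/\iota_t(TM)$. Because the inclusion $i_0 : M \hookrightarrow M \times \{0\} \subset X$ is a homotopy equivalence with homotopy inverse $\pi$, the underlying topological bundle of $G$ is isomorphic to $\pi^*(G|_{M \times \{0\}})$, and by the Oka--Grauert principle over the Stein base $X$ this topological isomorphism can be realised holomorphically. Restricting to any slice $M \times \{t\}$ then gives $F/\iota_t(TM) \cong F/\iota_0(TM)$. I do not anticipate a serious obstacle; the argument is essentially bookkeeping around Theorem \ref{th:maxrank}(b), and the only place requiring care is the rank arithmetic, which in Part 2 comes down to the identity $m + \left[\frac{m+1}{2}\right] = \left[\frac{3m+1}{2}\right]$.
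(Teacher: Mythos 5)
Your proposal is correct and follows essentially the same route as the paper, which likewise obtains Part 1 by applying Theorem \ref{th:maxrank}(b) with $X=M$, $E=TM$, $E'=F$, $X'=M'$, and Part 2 by applying it over $X=M\times\C$ with $X'=M\times\{0,1\}$, $E=\pi^*TM$, $E'=\pi^*F$; your rank arithmetic is right in both cases. The concluding Oka--Grauert argument identifying the slice restrictions of $E'/\Theta(E)$ is exactly the standard justification the paper leaves implicit.
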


The following immediate consequence of Corollary \ref{cor:isotopicembeddings} 
says that, under the stated dimension conditions, the isomorphism class of the normal bundle 
$\nu_f$ of a Stein immersion $f:M\to X$ is uniquely determined by the isomorphism class of 
the bundle $f^*TX$.

%
%  STABLE NORMAL BUNDLE
%
\begin{corollary} \label{cor:stablenormabundle}
Suppose that $M$ is a Stein manifold of dimension $m$.
If $f_i:M\to X_i$ for $i=0,1$ are holomorphic immersions into complex manifolds 
of the same dimension $n \ge \left[\frac{3m+1}{2}\right]$ such that $f^*_0(TX_0)\cong f^*_1(TX_1)$ 
are isomorphic complex vector bundles over $M$, then the normal bundles
$\nu_{f_i}$ for $i=0,1$ are also isomorphic.
\end{corollary}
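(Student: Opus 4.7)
The plan is to deduce this immediately from Corollary \ref{cor:isotopicembeddings}(2) applied to two embeddings of $TM$ into a single rank-$n$ bundle. First I would set $F_i := f_i^*TX_i$ and write $\iota_i : TM \hra F_i$ for the holomorphic subbundle embedding induced by $df_i$ (an immersion has injective differential, so the image is a holomorphic subbundle of rank $m$), so that by definition $\nu_{f_i} = F_i/\iota_i(TM)$. By the hypothesis $f_0^*TX_0 \cong f_1^*TX_1$, I would fix a holomorphic vector bundle isomorphism $\alpha : F_0 \to F_1$.

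Next I would transport $\iota_0$ through $\alpha$: the composition $\alpha\circ\iota_0 : TM \hra F_1$ is a second holomorphic vector bundle embedding of $TM$ into the same rank-$n$ bundle $F_1$. Since $M$ is Stein of dimension $m$ and $n \ge \left[\frac{3m+1}{2}\right]$, Corollary \ref{cor:isotopicembeddings}(2) applies to $F = F_1$ and directly yields a holomorphic vector bundle isomorphism
\[
	F_1/(\alpha\circ\iota_0)(TM) \;\cong\; F_1/\iota_1(TM) \;=\; \nu_{f_1}.
\]
Separately, since $\alpha$ carries $\iota_0(TM)$ isomorphically onto $(\alpha\circ\iota_0)(TM)$, it descends to a holomorphic vector bundle isomorphism of the quotients,
\[
	\nu_{f_0} \;=\; F_0/\iota_0(TM) \;\stackrel{\cong}{\lra}\; F_1/(\alpha\circ\iota_0)(TM).
\]
Composing the two isomorphisms gives $\nu_{f_0}\cong \nu_{f_1}$, as required.

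There is no real obstacle here: the entire content of the proof is the invocation of Corollary \ref{cor:isotopicembeddings}(2), which is precisely why the author records the statement as a corollary. The only subtle point worth flagging is that the dimension hypothesis $n \ge \left[\frac{3m+1}{2}\right]$ is exactly the rank condition needed to apply part (2) of the corollary, and that the proof uses the ambient manifolds $X_i$ only through the isomorphism class of the pullback bundles $f_i^*TX_i$ over $M$.
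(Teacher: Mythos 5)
Your argument is correct and is exactly the deduction the paper intends: the author states Corollary \ref{cor:stablenormabundle} as an immediate consequence of Corollary \ref{cor:isotopicembeddings}, and your transport of $\iota_0$ through an isomorphism $\alpha:f_0^*TX_0\to f_1^*TX_1$ followed by part (2) of that corollary (whose rank hypothesis is precisely $n\ge\left[\frac{3m+1}{2}\right]$) is the intended route. No gaps; the remark that only the isomorphism class of $f_i^*TX_i$ enters is also accurate.
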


Corollary \ref{cor:stablenormabundle}, together with the tubular neighbourhood theorem of 
Docquier and Grauert \cite{DocquierGrauert1960} (see also \cite[Theorem 3.3.3]{Forstneric2017E}), 
obviously imply the following result.

%
%   Biholomorphic neighbourhoods
%
\begin{proposition}\label{prop:stablenormalbundle}
Let $M$ be a Stein manifold of dimension $m$, and let $X_0, X_1$ be complex manifolds of
dimension $n\ge \left[\frac{3m+1}{2}\right]$. If $f_0:M\hra X_0$ is a holomorphic embedding
and $f_1:M\to X_1$ is a holomorphic immersion such that $f^*_0(TX_0)\cong f^*_1(TX_1)$ 
are isomorphic complex vector bundles over $M$, then there are an open Stein neighbourhood
$\Omega_0\subset X_0$ of $f_0(M)$ and a holomorphic immersion 
$\Phi:\Omega_0\to X_1$ such that $f_1=\Phi\circ f_0$ holds on $M$.  
If $f_1$ is an embedding then $\Phi$ can be chosen a biholomorphism onto its image.
\end{proposition}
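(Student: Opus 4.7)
The plan is to combine Corollary \ref{cor:stablenormabundle} with the Docquier--Grauert tubular neighbourhood theorem (in both its embedded and its immersed form, as recalled in the paragraph just before Theorem \ref{th:tubular}). The dimension hypothesis $n \ge [\tfrac{3m+1}{2}]$ is precisely what is needed to invoke Corollary \ref{cor:stablenormabundle}.

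First, since $f_0^*TX_0\cong f_1^*TX_1$ as complex vector bundles over $M$, Corollary \ref{cor:stablenormabundle} yields a complex vector bundle isomorphism $\nu_{f_0}\cong \nu_{f_1}$. Because $M$ is Stein, the Oka--Grauert principle recalled at the start of Section \ref{sec:normalbundle} upgrades this to a holomorphic vector bundle isomorphism $\Theta\colon \nu_{f_0}\xrightarrow{\cong}\nu_{f_1}$.

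Next, since $f_0$ is a holomorphic embedding, identify $M$ with $N_0:=f_0(M)\subset X_0$, so that $\nu_{f_0}=\nu_{N_0,X_0}$. The Docquier--Grauert theorem gives a biholomorphism $\Psi_0\colon U_0 \xrightarrow{\cong} V_0$ from an open neighbourhood $U_0\subset X_0$ of $N_0$ onto an open neighbourhood $V_0\subset \nu_{f_0}$ of the zero section, extending the canonical identification $N_0\to (\text{zero section})$. On the other hand, since $f_1$ is a holomorphic immersion from a Stein manifold, the paragraph preceding Theorem \ref{th:tubular} provides a holomorphic immersion $F_1\colon W_1\to X_1$ from an open neighbourhood $W_1\subset \nu_{f_1}$ of the zero section, with $F_1|_M=f_1$. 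After shrinking $V_0$ so that $\Theta(V_0)\subset W_1$, define
\[
        \Phi := F_1\circ \Theta\circ \Psi_0\colon U_0\lra X_1.
\]
This is a holomorphic immersion as a composition of two biholomorphisms and an immersion, and $\Phi\circ f_0=f_1$ holds on $M$ by construction. Finally, apply Theorem \ref{th:Steinneighbourhoods} (or Siu's theorem, the $K=\varnothing$ case) to produce an open Stein neighbourhood $\Omega_0\subset U_0$ of $N_0$; restrict $\Phi$ to $\Omega_0$.

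If $f_1$ is an embedding, then the same Docquier--Grauert theorem applied on the $X_1$ side shows that $F_1$ can be chosen as a biholomorphism onto its image (after further shrinking $W_1$), so that $\Phi$ is a biholomorphism from $\Omega_0$ onto $\Phi(\Omega_0)\subset X_1$. The only mildly delicate point is that all the neighbourhoods must be shrunk compatibly so that the composition is well defined and the final $\Omega_0$ is Stein; this is routine, since both $V_0$ and $W_1$ can be chosen as tubes shrinking to the zero section, and Stein neighbourhood bases are available by Theorem \ref{th:Steinneighbourhoods}. The only conceptual input beyond standard Oka--Grauert theory is Corollary \ref{cor:stablenormabundle}, and this is precisely what the hypothesis $n\ge [\tfrac{3m+1}{2}]$ is designed to supply, so there is no real obstacle; the proposition is essentially a formal consequence of the two cited tools.
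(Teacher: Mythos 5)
Your argument is correct and is essentially the paper's own: the Proposition is deduced from Corollary \ref{cor:stablenormabundle} together with the Docquier--Grauert tubular neighbourhood theorem applied on both sides, composing the identification of a neighbourhood of $f_0(M)$ with a tube in $\nu_{f_0}$, the normal bundle isomorphism, and the (immersed or embedded) tubular map into $X_1$. The only differences are cosmetic: the paper treats the embedded case and leaves the immersion case to the reader, which you spell out via the immersed extension $F_1$, and your Oka--Grauert upgrade of the normal bundle isomorphism is not needed since Corollary \ref{cor:stablenormabundle} already furnishes a holomorphic one.
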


\begin{comment}
\begin{proof}
Assume first that $f_1$ is also an embedding. 
The image $M_i:=f_i(M)$ is then a Stein submanifold of $X_i$ for $i=0,1$. 
Let $\pi_i : E_i\to M_i$ denote the holomorphic normal bundle of $M_i$ in $X_i$ for $i=0,1$.
By the tubular neighbourhood theorem, there exist for $i=0,1$ an open Stein neighbourhood 
$\Omega_i\subset X_i$  of $M_i$ and a biholomorphic map $\theta_i:\Omega_i\to \wt\Omega_i\subset E_i$ 
onto an open neighbourhood $\wt\Omega_i$ of the zero section $M_i\subset E_i$, 
with $\theta_i$ restricting to the identity map on $M_i$.
The assumption $f^*_0(TX_0)\cong f^*_1(TX_1)$ % are isomorphic complex vector bundles over $M$
implies that the biholomorphic map $g:=f_1\circ f_0^{-1}:M_0\to M_1$ is covered by a holomorphic vector
bundle isomorphism $\Theta:TX_0|_{M_0} \to TX_1|_{M_1}$. Since $n\ge \left[\frac{3m+1}{2}\right]$,
Corollary \ref{cor:stablenormabundle} gives a holomorphic vector bundle isomorphism 
$\theta:E_0\to E_1$ making the following diagram commute:
\[
	\xymatrix{   E_0\ar[d]_{\pi_0} \ar[r]^{\theta}  & E_1 \ar[d]^{\pi_1} \\
				   M_0           \ar[r]^{g}      & M_1
	}			   
\]
By suitably shrinking the domain $\Omega_i$ around $M_i$ for $i=0,1$ we obtain a biholomorphism 
$\Phi= \theta_1^{-1}\circ \theta \circ\theta_0 : \Omega_0\to\Omega_1$. 
A similar argument applies if $f_1:M\to X_1$ is an immersion;
we leave the details to the reader.
\end{proof}
\end{comment}

%
%   PROOF OF THEOREM 1.2
%
\begin{proof}[Proof of Theorem \ref{th:Kempty}]
By a seminal theorem of Eliashberg and Gromov \cite{EliashbergGromov1992AM} and
Sch{\"u}rmann \cite{Schurmann1997} (see also \cite[Theorem 9.3.1]{Forstneric2017E}), a
Stein manifold $M$ of dimension $m=\dim M\ge 2$ admits a proper holomorphic embedding $M\hra \C^n$ 
for any $n\ge \left[\frac{3m}{2}\right]+1$, and a proper holomorphic immersion 
$M\to  \C^n$ for any $n\ge \left[\frac{3m+1}{2}\right]$, where the latter also holds for $m=1$.
Since we assumed that the restricted tangent bundle $TX|_M$ is trivial, 
the conclusions of the theorem follow from Proposition \ref{prop:stablenormalbundle}. 
\end{proof}

%
%   EXAMPLE 
%
The following example shows that triviality of the restricted tangent bundle $TX|_M$ along a Stein submanifold
$M\subset X$ does not suffice for the existence of a Euclidean neighbourhood of $M$ in $X$ if the codimension
of $M$ is too small in comparison to $\dim M$.

%
%  EXAMPLE: Forster's surface
%
\begin{example}\label{ex:Forster}
Let $M$ be the Stein surface 
\[ 
    	M = \bigl\{ [x : y : z]\in \CP^2 :  x^2+y^2+z^2\ne 0 \bigr\}.
\]
It was shown by Forster \cite[p.\ 183]{Forster1970} that 
%$M$ deformation retracts within itself to the canonically embedded real projective plane $\RP^2\subset \CP^2$, and hence 
the tangent bundle of $M$ is nontrivial. % ; in fact, its first Chern class is nonzero. 
Thus, $M$ does not admit a proper holomorphic embedding into $\C^{3}$ (since any closed complex 
hypersurface in $\C^n$ has trivial tangent bundle), and it does not admit a holomorphic immersion into $\C^2$.
However, like any Stein surface, it admits a proper holomorphic immersion $M\to\C^3$.
Let $X\to M$ be the normal line bundle of this immersion, so $TM\oplus X$ is isomorphic to the trivial bundle 
$M\times \C^3$. Note that $X$ is nontrivial since we are in the stable
range, i.e., triviality of $X$ would imply triviality of $TM$ (see \cite[Corollary 8.3.9]{Forstneric2017E}). 
This means that $X$ is a Stein threefold containing a properly embedded copy of $M$, 
namely its zero section, such that $TX|_M\cong TM\oplus X$ is trivial, but $M$ has no 
Euclidean neighbourhood in $X$ which would make $M$ proper in $\C^3$.    
However, we do not know whether $M$ admits a {\em nonproper} holomorphic embedding into $\C^3$. 
(The argument of Forster relies on the fact that {\em closed} complex hypersurfaces in $\C^n$ are parallelizable,
so properness of $M$ in $\C^3$ is essential.) Clearly,  such an embedding would give a Euclidean 
neighbourhood of $M$ inside $X$.

Similar examples exist in all dimensions $m=\dim M\ge 2$. Indeed, with $M$ as above, it suffices to take 
$S=M^{m/2}$ (the Cartesian product of $m/2$ copies of $M$) if $m$ is even,
and $S=M^{(m-1)/2}\times \C$ if $m$ is odd; see \cite[p.\ 183]{Forster1970}.
Another family of examples of Stein manifolds, which do not immerse or embed properly holomorphically into
$\C^n$ below the threshold dimension in the Eliashberg--Gromov--Sch\"urmann theorem,  
can be found in the paper \cite{HoJacobowitzLandweber2012} by Ho, Jacobowitz, and Landweber. 
\qed\end{example}

%%%%%%%%%%%%%%%%%%%%%
%
%   SECTION 3: PRELIMINARIES
%
%%%%%%%%%%%%%%%%%%%%%
\section{A splitting lemma with interpolation for biholomorphic maps}\label{sec:preliminaries}

In this section we prepare some technical results which will be used in the proof of
Theorem \ref{th:tubular}. The main result of the section is Theorem \ref{Acta:theorem4.1}. 

We recall the following; see \cite[Lemma 6.5]{Forstneric1999JGEA} for the basic
case when $X=\C^n$ and note that the same proof applies in general. 

%
%   O(X)-convexity
%
\begin{lemma}\label{lem:hc}
Let $X$ be a Stein manifold, $K$ be a compact $\Oscr(X)$-convex set in $X$, 
and $X'$ be a closed complex subvariety of $X$. For any compact $\Oscr(X')$-convex subset 
$L \subset X'$ such that $K\cap X' \subset L$, the union $K\cup L$ is $\Oscr(X)$-convex. 
\end{lemma}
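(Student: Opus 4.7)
The plan is to show $K \cup L$ is $\Oscr(X)$-convex by exhibiting, for every $y \in X \setminus (K \cup L)$, a function $F \in \Oscr(X)$ with $|F(y)| > \max_{K \cup L} |F|$. I split the argument according to whether $y$ lies on the closed subvariety $X'$.

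If $y \notin X'$, Cartan's Theorem B applied to the ideal sheaf $\Jcal_{X'}$ produces $h \in \Oscr(X)$ with $h|_{X'} \equiv 0$ and $h(y) = 1$. Using $y \notin K$ together with $\Oscr(X)$-convexity of $K$, after rescaling and raising to a power I also fix $f \in \Oscr(X)$ with $\max_K |f| \le 1$ and $|f(y)| \ge 2$. Then $F := f^N h$ satisfies $F|_L \equiv 0$ (since $L \subset X'$), $\max_K |F| \le \max_K |h|$, and $|F(y)| \ge 2^N$; choosing $N$ large yields the separation.

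If $y \in X'$, the hypothesis $K \cap X' \subset L$ forces $y \notin K$: otherwise $y \in K \cap X' \subset L$, contradicting $y \notin L$. By $\Oscr(X')$-convexity of $L$ there is $g \in \Oscr(X')$ with $\max_L |g| \le 1$ and $|g(y)| \ge 2$, whence $\max_{K \cap X'} |g| \le 1$ via $K \cap X' \subset L$. The crux is to extend $g$ to some $\tilde g \in \Oscr(X)$ with $\tilde g|_{X'} = g$ and $\max_K |\tilde g| \le 1 + \epsilon$, for then $F := \tilde g$ itself separates $y$ from $K \cup L$, since $\max_{K \cup L} |F| \le 1 + \epsilon < 2 \le |F(y)|$.

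I expect the main obstacle to be this quantitative extension behind Case 2: producing a holomorphic extension of $g$ whose modulus on $K$ is essentially governed by the a priori bound $\max_{K \cap X'} |g|$. I would carry it out by starting from an arbitrary Theorem B extension $\tilde g_0$ of $g$, using continuity to obtain $|\tilde g_0| \le 1 + \epsilon/2$ on a small open neighbourhood $W$ of $K \cap X'$, and then applying Oka--Weil approximation with exact interpolation along the closed subvariety $X'$ to the piecewise-defined holomorphic model equal to $\tilde g_0$ on $W$ and to $0$ on a neighbourhood of the disjoint compact set $K \setminus W$. The required interpolation-with-approximation statement is a standard consequence of Cartan's Theorems A and B on the Stein manifold $X$ combined with $\Oscr(X)$-convexity of $K$.
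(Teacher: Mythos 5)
Your Case 1 is correct, and isolating a quantitative extension problem in Case 2 is a reasonable way to organise the argument (for the record, the paper gives no proof of this lemma but refers to \cite[Lemma 6.5]{Forstneric1999JGEA}). The gap is in your last paragraph: the ``piecewise-defined holomorphic model'' does not exist. The compact $K\setminus W$ is not separated from $W$: whenever a connected component of $K$ meets both $X'$ and the region where $|\tilde g_0|$ is large (e.g.\ $X=\C^2$, $X'=\{z_2=0\}$, $K=\overline{\D}{}^2$), the set $K\setminus W$ contains points of the boundary of $W$, so \emph{every} neighbourhood of $K\setminus W$ meets $W$, and there your two prescriptions ($\tilde g_0$ and $0$) conflict; on a connected neighbourhood of $K$ the identity theorem would force such a function to vanish identically, so it cannot agree with $\tilde g_0$ near $K\cap X'$. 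Hence there is no function to which Oka--Weil approximation with interpolation along $X'$ can be applied, and the asserted extension $\tilde g$ with $\tilde g|_{X'}=g$ and $\max_K|\tilde g|\le 1+\epsilon$ is not obtained.

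This is not a removable technicality: producing an extension of $g$ (or of its powers $g^N$) whose supremum on $K$ is governed by $\sup_{K\cap X'}|g|$, rather than by the size of an arbitrary Cartan extension, is precisely the substance of the lemma at points $y\in X'$; Oka--Weil with interpolation only transfers such control from a model that is already holomorphic on a \emph{full} neighbourhood of $K$, and constructing that model is the same problem over again. One workable way to close the gap is functional-analytic: let $A(K)$ be the uniform closure of $\Oscr(X)|_K$ (its maximal ideal space is $K$ since $K$ is $\Oscr(X)$-convex) and $I\subset A(K)$ the closed ideal generated by restrictions of functions vanishing on $X'$; by Cartan's Theorem A the common zero set of $I$ is $K\cap X'$, so the spectral radius formula in $A(K)/I$ yields, for large $N$, a function $F_N\in\Oscr(X)$ with $F_N|_{X'}=g^N$, $\max_K|F_N|\le (1+\delta)^N$, while $|F_N(y)|\ge 2^N$ and $\max_L|F_N|\le 1$, which gives the required separation. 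Alternatively, argue as in \cite[Lemma 6.5]{Forstneric1999JGEA}, or use a splitting with interpolation along $X'$ in the spirit of Lemma \ref{lem:linearsplitting}. As written, Case 2 of your proof has a genuine gap.
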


Next, we show that the set $K$ in Theorem \ref{th:main} can be replaced 
by a somewhat bigger compact strongly pseudoconvex domain satisfying the same hypotheses.

%
%   Replacing K by a strongly pseudoconvex domain
%
\begin{lemma}\label{lem:admissible}
If $(K,M)$ is an admissible pair in a complex manifold $X$ (see Definition \ref{def:admissible}),
then for every open set $U\subset X$ containing $K$ there exists a strongly pseudoconvex domain 
$D\subset X$ such that $K\subset D\subset \overline D\subset U$ and $\overline D\cap M$ is 
$\Oscr(M)$-convex. If in addition $\Phi:U\stackrel{\cong}{\to} \Phi(U)\subset \C^n$ $(n=\dim X)$ 
is a biholomorphic map such that $\Phi(K)$ is polynomially convex, then $D$ can be chosen such 
that $\Phi(\overline D)$ is also polynomially convex.
\end{lemma}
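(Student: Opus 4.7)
The plan is to combine Theorem~\ref{th:Steinneighbourhoods}, Lemma~\ref{lem:hc}, and a single strictly plurisubharmonic exhaustion on a Stein neighbourhood of $K\cup M$. First I apply Theorem~\ref{th:Steinneighbourhoods} to choose an open Stein neighbourhood $V\subset X$ of $K\cup M$ in which $M$ is closed and $K$ is $\Oscr(V)$-convex. For the first statement of the lemma I set $W:=V\cap U$; for the second, I use that $\Phi(K)$ is polynomially convex and sits inside the open set $\Phi(V\cap U)\subset\C^n$ to pick a bounded open polynomial polyhedron $P=\{z\in\C^n:|p_j(z)|<1,\ j=1,\ldots,k\}$ with $\Phi(K)\subset P$ and $\overline P\subset\Phi(V\cap U)$, and set $W:=\Phi^{-1}(P)\subset V\cap U$. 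In both cases $W$ is an open subset of $V\cap U$ containing $K$.

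Since $M$ is Stein and $K\cap M$ is a compact $\Oscr(M)$-convex subset of the open set $W\cap M$, I choose a smooth strictly plurisubharmonic exhaustion $\sigma:M\to\R$ with $K\cap M\subset\{\sigma<0\}$, $0$ a regular value, and $L:=\{\sigma\leq 0\}\subset W\cap M$; the set $L$ is $\Oscr(M)$-convex as a sublevel set of a strictly plurisubharmonic exhaustion on a Stein manifold. By Lemma~\ref{lem:hc}, the union $K\cup L$ is $\Oscr(V)$-convex in $V$. As it is a compact $\Oscr(V)$-convex subset of the Stein manifold $V$ contained in the open set $W$, standard Stein theory produces a smooth strictly plurisubharmonic exhaustion $\rho:V\to\R$ with $\rho<0$ on $K\cup L$ and $\rho>1$ on $V\setminus W$.

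Pick a regular value $c\in(0,1)$ of $\rho$ by Sard's theorem and set $D:=\{\rho<c\}$. Then $\overline D=\{\rho\leq c\}$ is compact in $V$ with smooth strictly Levi-pseudoconvex boundary, $K\subset D$, and $\overline D\subset\{\rho\leq 1\}\subset W\subset U$. The intersection $\overline D\cap M=\{\rho|_M\leq c\}$ is a sublevel set of the strictly plurisubharmonic function $\rho|_M$ on $M$, which is an exhaustion because $M$ is closed in $V$; hence $\overline D\cap M$ is $\Oscr(M)$-convex, proving the first assertion.

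For the second assertion, $\overline D$ is $\Oscr(V)$-convex as a sublevel set of a strictly plurisubharmonic exhaustion of the Stein manifold $V$, and since $\overline D\subset W\subset V$, restricting holomorphic functions from $V$ to $W$ shows that $\overline D$ is also $\Oscr(W)$-convex. Via the biholomorphism $\Phi:W\to P$, the compact set $\Phi(\overline D)\subset P$ is $\Oscr(P)$-convex. Its polynomial hull in $\C^n$ cannot escape $P$, because the bounds $|p_j|\leq\alpha_j<1$ on the compact set $\Phi(\overline D)$ persist on the polynomial hull; since $P$ is a Runge domain, the Oka--Weil approximation theorem implies that inside $P$ the polynomial hull coincides with the $\Oscr(P)$-hull, which is $\Phi(\overline D)$ itself. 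Hence $\Phi(\overline D)$ is polynomially convex in $\C^n$. The only non-routine step is the construction of $\rho$, a standard Stein-theoretic argument combining an $\Oscr(V)$-separating plurisubharmonic function on $V$ with a small multiple of a strictly plurisubharmonic exhaustion of $V$.
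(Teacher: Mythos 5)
Your proof is correct, but it takes a genuinely different route from the paper's. You reduce everything to sublevel sets of one strictly plurisubharmonic exhaustion of a Stein neighbourhood $V$ of $K\cup M$ supplied by Theorem \ref{th:Steinneighbourhoods}: after thickening $K\cap M$ to a compact $\Oscr(M)$-convex set $L\subset W\cap M$ and invoking Lemma \ref{lem:hc}, the set $K\cup L$ is $\Oscr(V)$-convex, and $D=\{\rho<c\}$; since $M$ is closed in $V$, the restriction $\rho|_M$ is again a plurisubharmonic exhaustion of $M$, so the $\Oscr(M)$-convexity of $\overline D\cap M$ comes for free, while the polynomial convexity of $\Phi(\overline D)$ costs you the extra step with the Runge polyhedron $P$ (transfer of $\Oscr(W)$-convexity through the biholomorphism and Oka--Weil). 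The paper distributes the work in the opposite way: it uses a plurisubharmonic function vanishing precisely on $\Phi(K)$ (resp.\ on $K$), so that $\Phi(\overline D)=\{\rho\le c\}$ is automatically polynomially convex as a sublevel set of a global exhaustion of $\C^n$, and the effort goes into the $\Oscr(M)$-convexity of $\overline D\cap M$, obtained by max-gluing $u=\rho\circ\Phi|_{M\cap U}$ with a suitable exhaustion $v$ of $M$ into a global exhaustion $\phi$ of $M$ having $\overline D\cap M$ as a sublevel set. Your argument leans on the heavier quoted result Theorem \ref{th:Steinneighbourhoods}, which the paper's proof of this particular lemma avoids (no circularity arises, since that theorem is external to the paper), and your $D$ necessarily contains the collar $L$ around $K\cap M$, whereas the paper's $D$ can be an arbitrarily small neighbourhood of $K$; the lemma as stated does not demand smallness, so this is harmless. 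Two small points you should make explicit: the bound ``$\rho>1$ on $V\setminus W$'' follows from the standard ``$\rho>0$ off $W$'' because an exhaustion attains its (then positive) infimum on the closed set $V\setminus W$, after which one rescales; and in choosing $P$ one should include scaled coordinate functions among the $p_j$ so that $P$ is bounded, making each compact set $\{|p_j|\le r\}$, $r<1$, polynomially convex, which is what gives the Runge property of $P$ via Oka--Weil.
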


\begin{proof}
Let us first prove the full assertion of the lemma, including the one in the last sentence.
Assume that $U$ and $\Phi$ are as in the last sentence, so $\Phi(K)$ is polynomially convex in $\C^n$.
By \cite[Proposition 2.5.1]{Forstneric2017E} % (see also \cite[Lemma, p.\ 430]{Rosay2006})
there is a smooth plurisubharmonic exhaustion function 
$\rho:\C^n\to\R_+$ such that $\rho^{-1}(0)=\Phi(K)$ and $\rho$ is strongly plurisubharmonic on 
$\C^n\setminus \Phi(K)$. Hence, there is a constant $c_0>0$ such that for every $c\in (0,c_0]$ the set 
\[
	\Omega_c=\{x\in U: \rho\circ\Phi(x)<c\} \Subset U
\] 
is a strongly pseudoconvex neighbourhood of $K$ such that 
$\Phi(\overline \Omega_c)=\{\rho\le c\}$ is polynomially convex in $\C^n$,  
and $\Omega_c$ has smooth boundary for almost all such $c$.
The function $u=\rho\circ \Phi|_{M\cap U}:M\cap U\to \R_+$ is plurisubharmonic and vanishes 
precisely on $K\cap M$. Since $K\cap M$ is $\Oscr(M)$-convex, 
there is a plurisubharmonic exhaustion function $v:M\to\R$ such that 
$v<0$ on $K\cap M$ and $v>0$ on $M\setminus \Omega_{c_0}$ (see \cite[Theorem 5.1.6]{Hormander1990}).
We replace $v$ by $h\circ v$, where $h:\R\to\R_+$ is a smooth convex increasing function with 
$h(t)=0$ for $t\le 0$, $h(t)>0$ for $t>0$, and $\lim_{t\to\infty}h(t)=+\infty$. 
The new function $v$ is a nonnegative plurisubharmonic exhaustion function on $M$ that vanishes on a neighbourhood of $K\cap M$ and is positive on $b\Omega_{c_0}$. 
Let $c_1=\min\{v(x): x\in b\Omega_{c_0}\}>0$. If $c_2>0$ is chosen such that 
$c_1c_2>\max\{u(x): x\in b\Omega_{c_0}\}$, then the function $\phi:M\to\R_+$ given by 
\[
	\phi(x)=\begin{cases} \max\{u(x),c_2 v(x)\}, & \text{if $x\in \Omega_{c_0}$}; \\
					 c_2v(x), & \text{if $x\in M\setminus  \Omega_{c_0}$}
		  \end{cases}			 
\]
is a plurisubharmonic exhaustion function on $M$ that agrees with $u$ on the set $\{v=0\}$,  
which is a neighbourhood of $K\cap M$. Hence for all sufficiently small $c>0$ we have that
\[
	\overline\Omega_c\cap M= \{x\in M\cap U: u(x)\le c\} = \{x\in M:\phi\le c\},
\]
and this set is $\Oscr(M)$-convex by \cite[Theorem 5.2.10]{Hormander1990}. 
For every such $c$ for which $b\Omega_c$ is smooth (i.e., $c$ is a regular value of $\rho$), 
the domain $D=\Omega_c$ satisfies the conclusion.

In general, 
%If we omit the condition in the last assertion of the lemma, then it suffices to 
we pick a smooth plurisubharmonic 
function $\rho\ge 0$ on a Stein open neighbourhood $U\subset X$ of $K$ such that 
$\rho^{-1}(0)=K$ and $\rho$ is strongly plurisubharmonic on $U\setminus K$ 
(see \cite[Proposition 2.5.1]{Forstneric2017E}). The above proof then shows that for any 
$c>0$ which is a regular value of $\rho$, the set $D=\{\rho<c\}$ satisfies the desired conclusion. 
\end{proof}

In the proof of Theorem \ref{th:tubular} we shall use the following 
notion from \cite[Definition 5.7.1]{Forstneric2017E}.

%
%    CARTAN PAIR
%
\begin{definition} \label{def:Cartan-pair}
A pair $(A,B)$ of compact subsets in a complex manifold $X$
is a {\em Cartan pair} if it satisfies the following two conditions:    
\begin{enumerate} [\rm(i)]
\item  The sets $D=A\cup B$ and $C=A\cap B$ are Stein compacts (i.e., they have  
bases of open Stein neighbourhoods), and
\item $A$ and $B$ are {\em separated} in the sense that
$\overline{A\setminus B}\cap \overline{B\setminus A} =\varnothing$.
\end{enumerate}
\end{definition}

By \cite[Proposition 5.7.3]{Forstneric2017E}, one can approximate a Cartan pair
from the outside by Cartan pairs in which $A,B,C,D$ %=A\cap B$ and $D=A\cup B$ 
are closures of strongly pseudoconvex Stein domains in $X$. 

%
%   FROM JP & F, 2001
%
\begin{comment}
We denote by $H^\infty(X)$ the Banach algebra of bounded holomorphic functions
on a complex manifold $X$. If $M$ is a closed complex subvariety of $X$ then 
$\HiM(X)$ denotes the closed subspace of $\Hi(X)$ consisting of all function that vanish on $M$.
Recall the following result from \cite[Lemma 3.2]{ForstnericPrezelj2001}.

%
%  A linear splitting lemma with interpolation on a subvariety
%
\begin{lemma}\label{lem:linearsplitting}
Let $X$ be a Stein manifold, $M$ be a closed complex subvariety of $X$, and $A, B \Subset X$ be open
relatively compact domains satisfying the following conditions:
\begin{enumerate}[\rm (i)]
\item $D=A\cup B$ is a smooth strongly pseudoconvex domain in $X$,
\item $\overline{A\setminus B} \cap \overline{B\setminus A} =\varnothing$, and
\item $\overline{M\cap C} \subset D$, where $C=A\cap B$.
\end{enumerate}
Then there exist bounded linear operators $\Ascr : \HiM(C) \to \HiM(A)$ and $\Bscr : \HiM(C) \to \HiM(B)$
such that $c=\Ascr(c) - \Bscr(c)$ holds on $C$ for each $c\in \HiM(C)$.
\end{lemma}

We shall need a generalization of this lemma for sections of holomorphic vector bundles.
\end{comment}

Let $\pi:E\to X$ be a holomorphic hermitian vector bundle on a complex manifold $X$, and let
$M$ be a closed complex subvariety of $X$. Given an open set $U\subset X$ we consider  
the Banach spaces 
\begin{equation}\label{eq:notation}
	\Gamma^\infty_M(U,E) \subset \Gamma^\infty(U,E), 
\end{equation}
where $\Gamma^\infty(U,E)$ consists of all bounded holomorphic sections $U\to E|_U$ 
and is endowed with the supremum norm (using the given hermitian metric on $E$), 
and $\Gamma^\infty_M(U,E)$ denotes its closed subspace consisting of sections vanishing on $M\cap U$. 
Both spaces are nontrivial if $X$ is Stein and $U$ is relatively compact in $X$.

We shall need the following linear splitting lemma with interpolation on a subvariety.

%
%  A linear splitting lemma with interpolation on a subvariety
%
\begin{lemma}\label{lem:linearsplitting}
Assume that $X$ is a Stein manifold, $\pi:E\to X$ is a holomorphic hermitian vector bundle, 
$M$ is a closed complex subvariety of $X$, and $A, B$ are open
relatively compact sets in $X$ satisfying the following conditions:
\begin{enumerate}[\rm (i)]
\item $D=A\cup B$ is a smoothly bounded strongly pseudoconvex domain in $X$,
\item $\overline{A\setminus B} \cap \overline{B\setminus A} =\varnothing$
(the separation condition), and
\item $M$ has no singularities on $bD$ and intersects $bD$ transversely.
\end{enumerate}
Then there exist bounded linear operators 
\[
	\Ascr : \Gamma^\infty_M(C,E) \to \Gamma^\infty_M(A,E),\qquad
	\Bscr : \Gamma^\infty_M(C,E) \to \Gamma^\infty_M(B,E)
\]
such that $c=\Bscr(c) - \Ascr(c)$ holds on $C$ for each $c\in \Gamma^\infty_M(C,E)$.
\end{lemma}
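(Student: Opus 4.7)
The plan is to adapt the classical Henkin-type splitting argument used in \cite[Lemma 3.2]{ForstnericPrezelj2001} to sections of $E$ with the extra interpolation condition on $M$, reducing the problem to a bounded linear $\dibar$-problem on the strongly pseudoconvex domain $D$ with prescribed vanishing on $M$. Throughout I use the standard correspondence between bounded holomorphic sections and their Cauchy–Fantappiè/Henkin-type representations, so boundedness of the constructed operators is automatic once a bounded linear $\dibar$-solver is at hand.

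First, by \cite[Proposition 5.7.3]{Forstneric2017E} and a small perturbation (using transversality of $M$ with $bD$), I may assume without loss of generality that $A$, $B$, $C$, $D$ are all smoothly bounded strongly pseudoconvex domains and that $M$ is smooth and transverse to each of $bA$, $bB$, $bC$, $bD$. Next, using the separation condition (ii), pick a cutoff $\chi\in \Cscr^\infty(X)$ with $\chi\equiv 0$ on an open neighbourhood of $\overline{A\setminus B}$ and $\chi\equiv 1$ on an open neighbourhood of $\overline{B\setminus A}$. For $c\in \Gamma^\infty_M(C,E)$, the section $\chi c$ vanishes on a neighbourhood of the $B$-side of $bC$ (which lies inside $A$), so it extends by $0$ to a smooth, bounded section $g=g(c)$ on $A$ which vanishes on $M\cap A$; similarly $(1-\chi)c$ extends by $0$ to a smooth, bounded section $f=f(c)$ on $B$ vanishing on $M\cap B$, and $c=f+g$ on $C$. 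Define the $E$-valued $(0,1)$-form $\omega$ on $D$ by $\omega|_A=\dibar g$ and $\omega|_B=-\dibar f$; the two definitions agree on $C$ (both equal $\dibar\chi\wedge c$), so $\omega$ is a bounded, $\dibar$-closed, $E$-valued $(0,1)$-form on $D$ which vanishes on $M$ (since $c$ does), and $c\mapsto \omega$ is bounded and linear.

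The decisive step is to produce a bounded linear operator
\[
	T : \{\text{bounded }\dibar\text{-closed }E\text{-valued }(0,1)\text{-forms on }D
	\text{ vanishing on }M\} \;\longrightarrow\; \Gamma^\infty_M(D,E)
\]
satisfying $\dibar (T\omega)=\omega$. To construct $T$, I first apply a Henkin–Grauert–Lieb-type bounded linear $\dibar$-solution operator $S$ on the smooth strongly pseudoconvex domain $D$ (see, e.g., the constructions in \cite[Chapter 2]{Forstneric2017E} or the Leiterer survey) to get $v=S\omega\in \Gamma^\infty(D,E)$ with $\dibar v=\omega$. Its trace $v|_{M\cap D}$ is then a bounded holomorphic section of $E|_{M\cap D}$. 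Using transversality of $M$ with $bD$, the subvariety $M\cap D$ is the intersection of $D$ with a closed Stein submanifold near $\overline D$, and the standard bounded linear extension theorem on smooth strongly pseudoconvex domains (obtained by applying $\Hi$-extension from $M\cap D$ to $D$ via a Koppelman–Leiterer kernel associated with a finite set of defining functions of $M$) yields a bounded linear operator $\Escr : \Hi(M\cap D,E)\to \Hi(D,E)$ with $(\Escr h)|_{M\cap D}=h$. Setting $T\omega = v - \Escr(v|_{M\cap D})$ gives a bounded holomorphic correction $u:=T\omega\in \Gamma^\infty_M(D,E)$ with $\dibar u=\omega$ and $\|u\|_\infty\le \mathrm{const}\cdot\|\omega\|_\infty\le \mathrm{const}\cdot\|c\|_\infty$.

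Finally, put
\[
	\Ascr(c) := -g + u|_A, \qquad \Bscr(c):= f + u|_B.
\]
Both maps are bounded linear in $c$ by construction. On $A$, $\dibar\Ascr(c)=-\dibar g+\omega|_A=0$ and $\Ascr(c)$ vanishes on $M\cap A$, so $\Ascr(c)\in \Gamma^\infty_M(A,E)$; similarly $\Bscr(c)\in \Gamma^\infty_M(B,E)$. On $C$, $\Bscr(c)-\Ascr(c)=f+g=c$, as required. The step I expect to be the main obstacle is the construction of the bounded linear extension operator $\Escr$: one needs precise Henkin-kernel estimates adapted to the transversality of $M$ with $bD$ to guarantee that the extension is uniformly bounded; the other steps are essentially formal once $\Escr$ is available.
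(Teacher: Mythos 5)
Your construction is, in substance, the paper's own proof in different packaging. The cutoff decomposition $c=f+g$ followed by a sup-norm bounded $\dibar$-solver on the strongly pseudoconvex domain $D$ is exactly the content of the Cousin--I-with-bounds operators $\Ascr_0,\Bscr_0$ that the paper imports from \cite[Lemma 2.4]{ForstnericPrezelj2000}, and your correction $u=v-\Escr\bigl(v|_{M\cap D}\bigr)$ coincides with the paper's subtraction of $S\circ\Hscr$: the amalgamated section $\Hscr(c)$ is precisely the trace of the $\dibar$-solution on $M\cap D$, and the key external input in both arguments is Henkin's bounded linear extension operator $H^\infty(D\cap M)\to H^\infty(D)$ (\cite{Henkin1972}, \cite[Theorem 4.11.1]{HenkinLeiterer1984}), for which transversality of $M$ with $bD$ is the relevant hypothesis. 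Two remarks. First, the paper sidesteps bundle-valued Henkin kernels: it proves the scalar case, applies it componentwise on a trivial bundle $X\times\C^N$, embeds $E$ as a holomorphic direct summand of such a bundle (possible since $X$ is Stein), and postcomposes the resulting operators with the projection $X\times\C^N\to E$; this is cleaner than constructing $E$-valued solution and extension kernels directly, though your route also works. Second, your opening reduction ``by a small perturbation I may assume $M$ is smooth'' is not legitimate: $M$ is part of the data (the operators must produce sections vanishing on this particular $M$), and hypothesis (iii) only excludes singularities on $bD$, not in the interior of $D$. The paper addresses this point explicitly: $M\cap D$ then has at most finitely many singularities, all interior, and the Henkin--Leiterer construction of the extension operator --- whose estimates are concentrated near $bD$, where $M$ is nonsingular and transverse --- carries over to this situation; moreover the lemma is only invoked later for nonsingular $M$. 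With that step removed (or replaced by this observation), your argument is complete and matches the paper's.
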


\begin{proof}
Denote by $H^\infty(U)$ the Banach space of bounded holomorphic functions on $U\subset X$ 
endowed with the sup-norm, and by $H^\infty_M(U)$ its closed subspace consisting of function 
vanishing on $M\cap U$. For the trivial bundle $E=X\times \C$ (i.e., for functions) the lemma follows from  
\cite[proof of Lemma 3.2]{ForstnericPrezelj2001}, except that we replace the use of 
\cite[Lemma 3.1]{ForstnericPrezelj2001} by Henkin's 
bounded linear extension operator $S: H^\infty(D\cap M) \to H^\infty(D)$ satisfying
\begin{equation}\label{eq:S}
	(Sh)(x)=h(x)\ \ \text{for all $h\in H^\infty(D\cap M)$ and $x\in D\cap M$}.
\end{equation}
(See Henkin \cite{Henkin1972} or Henkin and Leiterer 
\cite[Theorem 4.11.1 and Remark on p.\ 196]{HenkinLeiterer1984}.
In the cited sources, the subvariety $M$ is assumed to be without 
singularities, and we shall only use Lemma \ref{lem:linearsplitting} in this particular case.
However, our assumptions imply that $M\cap D$ has at most finitely many singularities,
and an inspection of \cite[proof of Theorem 4.11.1]{HenkinLeiterer1984}
shows that it caries over to this situation. Indeed, the main action takes place near
the boundary points of $D$, where $M$ is nonsingular.) We first find bounded linear operators 
\begin{equation}\label{eq:A0B0}
	\Ascr_0: H^\infty(C) \to H^\infty(A),\qquad  \Bscr_0: H^\infty(C) \to H^\infty(B)
\end{equation}
such that 
\[ 
	c=\Bscr_0(c) - \Ascr_0(c)\ \ \text{holds on $C$ for each $c\in H^\infty(C)$}.
\] 
This is a special case of the Cousin--I problem with bounds, which is solved by using 
the separation condition (ii) and the existence of a sup-norm bounded linear solution 
operator to the $\dibar$-equation on $D$ at the level of $(0,1)$-forms; 
see \cite[Lemma 2.4]{ForstnericPrezelj2000}. Assume now that $c=0$ on $C\cap M$.
Then, $\Bscr_0(c)-\Ascr_0(c)=c$ vanishes on $C\cap M$, so the pair 
$(\Ascr_0(c)|_{A\cap M},\Bscr_0(c)|_{B\cap M})$ amalgamates into 
a bounded holomorphic function $\Hscr(c) \in H^\infty(D\cap M)$. This defines
a bounded linear operator $\Hscr:H^\infty_M(C)\to H^\infty(D\cap M)$. 
Let $S: H^\infty(D\cap M) \to H^\infty(D)$ be Henkin's bounded extension operator
satisfying \eqref{eq:S}. Then, the bounded linear operators
\begin{equation}\label{eq:subtractH}
	\Ascr = \Ascr_0-S\circ \Hscr:  H^\infty_M(C) \to H^\infty_M(A), \quad  
	\Bscr = \Bscr_0-S\circ \Hscr:  H^\infty_M(C) \to H^\infty_M(B)
\end{equation}
satisfy $c=\Bscr(c) - \Ascr(c)$ for every $c\in H^\infty_M(C)$.

The same result holds for a trivial bundle $X\times \C^N$ by applying it componentwise.
Any holomorphic vector bundle $E\to X$ over a Stein manifold embeds as a holomorphic 
vector subbundle of the trivial bundle $X\times\C^N$ of sufficiently large rank $N$,
and we have a holomorphic direct sum $X\times \C^N=E\oplus E'$ for some complementary
holomorphic vector subbundle $E'$ of $X\times \C^N$. This gives a holomorphic vector bundle 
projection $\theta: X\times \C^N \to E$ with the kernel $E'$.
It now suffices to apply the lemma for $X\times \C^N$ and postcompose the resulting 
operators by the projection $\theta$. 
\end{proof}

%
%   REMARK
%
\begin{remark}\label{rem:shrinking}
If $D'\Subset D=A\cup B$ is a slightly smaller domain and we set 
$A'=A\cap D'$, $B'=B\cap D'$, then we can drop the condition (iii) on $M$
and obtain bounded linear operators
\[
	\Ascr : \Gamma^\infty_M(C,E) \to \Gamma^\infty_M(A',E),\qquad
	\Bscr : \Gamma^\infty_M(C,E) \to \Gamma^\infty_M(B',E)
\]
such that $c=\Bscr(c) - \Ascr(c)$ holds on $C'=A'\cap B'$ for each $c\in \Gamma^\infty_M(C,E)$.
To see this, we find operators $\Ascr_0$ and $\Bscr_0$ as before (see \eqref{eq:A0B0}), but in the last step
\eqref{eq:subtractH} we use a bounded linear extension operator 
$S: H^\infty(D\cap M) \to H^\infty(D')$ furnished by \cite[Lemma 3.1]{ForstnericPrezelj2001}.
The construction of such an operator uses Cartan's extension theorem and the Bergman space 
$L^2(D')\cap \Oscr(D')$, and it applies to any closed complex subvariety $M$ of $X$. 
\qed\end{remark}

%
%   REMARK
%
\begin{remark}\label{rem:FP3}
Note that \cite[Lemma 3.2]{ForstnericPrezelj2001} coincides with the special case of 
Lemma \ref{lem:linearsplitting} for functions vanishing on $M$, but with condition (iii) 
on $M$ replaced by the following one:
\begin{enumerate}
\item[\rm (iii')] $\overline {M\cap C} \subset D=A\cup B$. Equivalently, $M$ does not intersect $bD\cap bC$. 
\end{enumerate}
I have recently realized that this extra assumption does not allow the geometric construction in the first part 
of the proof of \cite[Theorem 1.4]{ForstnericPrezelj2001} (see p.\ 62). (The second part of the said proof,
in which the section $f_0$ is assumed to be holomorphic on a neighbourhood of $K\cup X_0$ in $X$,
where $X_0$ is a subvariety of $X$, is fine.) This can be amended in two ways. We can allow  
small shrinking of domains as described in Remark \ref{rem:shrinking}, 
which makes condition (iii') superfluous; this suffices for the construction in \cite{ForstnericPrezelj2001}.
Alternatively, a section $f_0$ which is holomorphic on a subvariety $X_0$
and on a neighbourhood of a compact $\Oscr(X)$-convex set $K\subset X$ extends (by approximation on $K$)
to a section which is holomorphic on a neighbourhood of $K\cup X_0$ in $X$, and this does not require any
conditions on the submersion $h:Z\to X$ in question; see \cite[Theorem 3.4.1]{Forstneric2017E}. 
The latter approach has been used in our subsequent constructions in Oka theory, and in particular
in the monograph \cite{Forstneric2017E}.
\qed\end{remark}

The following nonlinear splitting lemma will be used in the proof of Theorem \ref{th:tubular}.

%
%
%
%    ACTA: THEOREM 4.1
%
%
\begin{theorem}
\label{Acta:theorem4.1}
Let $X$ be a complex manifold with a Riemannian distance function $\dist$, 
$M$ be a closed complex submanifold of $X$, and $(A,B)$ be a Cartan pair in $X$. 
Given an open set $\wt C \subset X$ containing $C$, there are open sets $A'\supset A$, $B'\supset B$
with $C' = A'\cap B'\subset \wt C$ and constants $\epsilon_0>0, c_0>0$ such that the following holds. 
For every injective holomorphic map $\gamma : \wt C\to X$ with 
$\epsilon=\dist_{\wt C}(\gamma,\Id) < \epsilon_0$ and $\gamma|_{\wt C\cap M}=\Id$
there exist injective holomorphic maps $\alpha : A'\to X$, $\beta: B'\to X$ which depend continuously 
on $\gamma$, they agree with the identity map on $A'\cap M$ and $B'\cap M$, respectively, and satisfy
\begin{equation}\label{eq:linearestimate}
    \gamma\circ\alpha = \beta \ {\rm on\ } C',\quad\ 
    \dist_{A'}(\alpha,\Id) < c_0\epsilon, \quad\  \dist_{B'}(\beta,\Id) < c_0\epsilon.
\end{equation}
\end{theorem}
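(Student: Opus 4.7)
My plan is a Newton-type iteration on slightly shrinking domains, using the linear splitting lemma with interpolation from Lemma \ref{lem:linearsplitting} and Remark \ref{rem:shrinking} at each step so that the interpolation condition $\alpha|_{A'\cap M}=\beta|_{B'\cap M}=\Id$ is preserved automatically.

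First, I replace $X$ by an open Stein neighbourhood of the Stein compact $\overline{A\cup B}$ and fix a holomorphic spray $E\colon W\to X$ on an open neighbourhood $W\subset TX$ of the zero section, obtained from the Docquier--Grauert tubular neighbourhood of the diagonal in $X\times X$. The spray restricts to a biholomorphism between a neighbourhood of the zero section in $TX$ and a neighbourhood of the diagonal in $X\times X$, producing a correspondence $\gamma\leftrightarrow v$ between holomorphic maps $U\to X$ close to $\Id$ and small holomorphic sections $v$ of $TX|_U$, under which $\gamma|_{U\cap M}=\Id$ is equivalent to $v\in \Gamma^\infty_M(U,TX)$. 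Write the given map as $\gamma=\gamma_0=E\circ v_0$ with $v_0\in \Gamma^\infty_M(\wt C,TX)$ of sup-norm $O(\epsilon)$.

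Next, pick a decreasing sequence of Cartan pairs $(A_k,B_k)$ with $A_0\supset A'$, $B_0\supset B'$, $A_{k+1}\Subset A_k$, $B_{k+1}\Subset B_k$, and $C_k:=A_k\cap B_k\subset \wt C$, shrinking geometrically to the pair $(A',B')$. Remark \ref{rem:shrinking} applied to the holomorphic vector bundle $TX$ supplies, for each $k$, bounded linear operators
\[
    \Ascr_k\colon \Gamma^\infty_M(C_k,TX)\to \Gamma^\infty_M(A_{k+1},TX),\qquad
    \Bscr_k\colon \Gamma^\infty_M(C_k,TX)\to \Gamma^\infty_M(B_{k+1},TX),
\]
with $\Bscr_k(c)-\Ascr_k(c)=c$ on $C_{k+1}$ and with operator norms uniformly bounded in $k$ (the geometry of the pairs stabilizes). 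Put $\alpha_k:=E\circ \Ascr_{k-1}(v_{k-1})$ on $A_k$ and $\beta_k:=E\circ \Bscr_{k-1}(v_{k-1})$ on $B_k$; both are injective holomorphic maps close to $\Id$ that fix $M$ pointwise. Define $\gamma_k:=\beta_k^{-1}\circ \gamma_{k-1}\circ \alpha_k$ on a neighbourhood of $C_{k+1}$, which still fixes $M$ because each constituent map does. A Taylor expansion of the spray $E$, using the splitting identity $\Bscr_{k-1}(v_{k-1})-\Ascr_{k-1}(v_{k-1})=v_{k-1}$ on $C_k$ together with the smoothness of composition and inversion near $\Id$, gives the Newton-type estimate $\|v_k\|\le Q\|v_{k-1}\|^2$ for a constant $Q$ independent of $k$. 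Once $\epsilon_0$ is small enough, this yields super-exponential decay $\|v_k\|\le \epsilon^{2^k}$, and hence the partial compositions $\alpha_1\circ\cdots\circ\alpha_k$ and $\beta_1\circ\cdots\circ\beta_k$ converge uniformly on $A'$ and $B'$ to injective holomorphic maps $\alpha,\beta$ satisfying $\gamma\circ\alpha=\beta$ on $C'$ together with the estimates \eqref{eq:linearestimate}. Continuous dependence on $\gamma$ is automatic because the construction is a composition of the fixed bounded linear operators $\Ascr_k,\Bscr_k$ with the continuous nonlinear operations of composition, inversion, and application of $E$ near $\Id$.

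The main obstacle is the uniform bookkeeping of the domain shrinkage and of the splitting operator norms through the iteration, which is handled as in the proof of the classical splitting lemma \cite[Theorem 9.7.1]{Forstneric2017E}. The only genuinely new element is the subspace-valued nature of the linear splitting provided by Lemma \ref{lem:linearsplitting} and Remark \ref{rem:shrinking}: each $\Ascr_k(v_{k-1})$ and $\Bscr_k(v_{k-1})$ lies in the vanishing-on-$M$ subspace $\Gamma^\infty_M$, so every $\alpha_k$ and $\beta_k$ fixes $M$ pointwise, and composition of such maps still fixes $M$. Thus the interpolation condition propagates through every Newton step, yielding $\alpha|_{A'\cap M}=\Id$ and $\beta|_{B'\cap M}=\Id$ in the limit without any extra work.
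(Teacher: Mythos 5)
Your argument is correct in outline, but it follows a genuinely different route from the paper. The paper proves Theorem \ref{Acta:theorem4.1} by adapting the non-iterative proof of \cite[Theorem 9.7.1]{Forstneric2017E}: it fixes a single enlarged Cartan pair $(A',B')$ with $D'=A'\cup B'$ strongly pseudoconvex and $M$ transverse to $bD'$, applies Lemma \ref{lem:linearsplitting} once to obtain one pair of bounded operators $\Ascr,\Bscr$, identifies maps near $\Id$ (fixing $M$) with small sections in $\Gamma^\infty_M$ of a bundle $E\cong TX$ via a local dominating spray and Lemma \ref{lem:transition}, and then solves $\gamma\circ\alpha_\xi=\beta_\xi$ in one stroke by the implicit function theorem in Banach spaces; the linear estimates \eqref{eq:linearestimate} and the continuous dependence on $\gamma$ fall out of the smoothness of the resulting solution operator $\gamma\mapsto\psi(\gamma)$. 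You instead run the classical Newton-type iteration on a shrinking sequence of Cartan pairs, which is essentially the original proof of \cite[Theorem 4.1]{Forstneric2003AM} upgraded with the interpolation splitting of Lemma \ref{lem:linearsplitting} and Remark \ref{rem:shrinking}; this is precisely the alternative route the paper itself indicates in Remark \ref{rem:variety}, and it buys extra generality (with Remark \ref{rem:shrinking} no transversality of $M$ with $bD$ is needed, so the argument extends to subvarieties with singularities), at the price of heavier bookkeeping, whereas the implicit function theorem proof is shorter and yields the estimates and continuity immediately.

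Three points in your write-up need tightening. First, the assertion that the operators $\Ascr_k,\Bscr_k$ of Remark \ref{rem:shrinking} have norms uniformly bounded in $k$ is not justified: the extension operator from $D_k\cap M$ into the smaller domain $D_{k+1}$ degenerates as the gap shrinks, so with geometrically shrinking pairs you should expect the norms (and hence your constant $Q$) to grow with $k$; the scheme still closes because this growth is at most geometric while $\|v_k\|$ decays doubly exponentially, but this must be said --- or avoided altogether by noting that $M$ is a submanifold, choosing $D'$ with $M$ transverse to $bD'$, and using Lemma \ref{lem:linearsplitting} once on a fixed pair, as the paper does. Second, the bookkeeping you defer to is that of \cite[Theorem 4.1]{Forstneric2003AM}, not of \cite[Theorem 9.7.1]{Forstneric2017E}: the latter proof contains no iteration. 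Third, uniform convergence of the partial compositions gives holomorphic limits close to $\Id$, but their injectivity is not automatic from injectivity of the approximants; conclude it, as the paper does for $\alpha_{\psi(\gamma)},\beta_{\psi(\gamma)}$, by a Cauchy-estimate argument on slightly smaller domains using $\dist_{A'}(\alpha,\Id)\le c_0\epsilon$ and $\dist_{B'}(\beta,\Id)\le c_0\epsilon$ with $\epsilon_0$ small.
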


%
%   Generalization to subvarieties
%
\begin{remark}\label{rem:variety}
This is a version of \cite[Theorem 9.7.1]{Forstneric2017E} with interpolation of the identity map on
a closed complex submanifold. The cited theorem is an improvement of 
\cite[Theorem 4.1]{Forstneric2003AM}, with linear estimates in \eqref{eq:linearestimate} and 
a substantially simpler proof than the one in \cite{Forstneric2003AM}. 

The original proof of the splitting theorem \cite[Theorem 4.1]{Forstneric2003AM} can be used to 
obtain Theorem \ref{Acta:theorem4.1} in the more general case when $M$ is a complex 
subvariety with singularities in $X$; see Remark \ref{rem:shrinking}. Since this generalization will not be 
needed in the present paper, we leave it to an interested reader.
\qed\end{remark}

\begin{proof}
We shall adapt the proof of  \cite[Theorem 9.7.1]{Forstneric2017E} to our current situation,
using this opportunity to explain in a better way a certain point in the proof.

By \cite[Proposition 5.7.3]{Forstneric2017E} there are smoothly bounded open sets 
$A'\supset A$, $B'\supset B$ such that $(\overline{A'},\overline {B'})$ is a Cartan pair,
the closure of $C'=A'\cap B'$ is contained in the open set $\wt C$, and 
$D'=A'\cup B'$ is a smoothly bounded strongly pseudoconvex domain.
We may furthermore choose $D'$ such that $M$ intersects $bD'$ transversely.
We replace $X$ by a Stein neighbourhood of $\overline {D'}$ and shrink $\wt C$ around $\overline{C'}$
if necessary. By a standard method, using compositions of flows of holomorphic vector fields on $X$,
we find a large integer $N\in\N$, an open neighbourhood $\Omega\subset X\times \C^N$
of the zero section $X\times\{0\}^N$, and a holomorphic map $\Phi : \Omega\to X$
such that for every $x\in X$ we have $\Phi(x,0)=x$ and
\[
	\text{the map}\ \ \Theta(x) := \frac{\di}{\di z}\Phi(x,z)\big|_{z=0} :\C^N\to T_xX\ \ \text{is surjective}.
\]
(A map $\Phi$ with these properties is called a {\em local dominating holomorphic spray} on $X$.)
We can split $X\times \C^N=E\oplus E'$, 
where $E'$ is the kernel of $\Theta$ and $E$ is some complementary 
holomorphic vector subbundle, which is isomorphic to the tangent bundle $TX$.
After shrinking $\Omega$ around the zero section, $\Phi_x:E_x\cap \Omega_x \to X$
is a biholomorphic map onto an open neighbourhood of $x$ in $X$ for every $x\in X$. Since this
holds for every subbundle $E\subset X\times \C^N$ complementary to $E'$, one easily sees
the following. %(This should be used in place of \cite[Lemma 9.7.2]{Forstneric2017E}.)

%
%   EXISTENCE OF TRANSITION MAP
%
\begin{lemma}\label{lem:transition}
Given a relatively compact domain $U\Subset X$ there exist constants $\epsilon_0>0$ 
and $d_0>1$ such that the following holds. For any pair of holomorphic maps $f,g:U\to X$ such that
$\dist(f,\Id):=\sup_{x\in U}\dist(f(x),x)<\epsilon_0$ and $\dist(g,\Id)<\epsilon_0$
there exists a unique holomorphic section $\xi\in \Gamma^\infty(U,E)$ such that for every $x\in U$
we have that 
\[
	\Phi(f(x),\xi(x)) = g(x)\ \ \ \text{and}\ \ \ d_0^{-1}|\xi(x)| \le \dist(f(x),g(x)) \le d_0 |\xi(x)|.
\]
\end{lemma}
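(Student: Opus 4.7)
The proof is essentially a parametric holomorphic inverse function theorem for $\Phi$. I would begin by forming the auxiliary map
$$\Psi: E\cap \Omega \longrightarrow X\times X,\qquad \Psi(x,v)=\bigl(x,\Phi(x,v)\bigr).$$
Because $E_x$ was chosen as a complement to $E'_x=\ker\Theta(x)\subset \C^N$, the restriction $\di_v\Phi(x,0)|_{E_x}:E_x\to T_xX$ is a linear isomorphism, so the differential of $\Psi$ is invertible along the zero section $X\times\{0\}^N$. After shrinking $\Omega$ around the zero section, I may assume that $\Psi$ is a biholomorphism from $E\cap\Omega$ onto an open neighbourhood $W$ of the diagonal $\Delta\subset X\times X$.

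Next I would fix an open set $U'$ with $\overline U\subset U'\Subset X$. Since the compact diagonal piece $\Delta\cap(\overline{U'}\times\overline{U'})$ lies inside the open set $W$, one can choose $\epsilon_0>0$ so that any holomorphic map $h:U\to X$ with $\dist_U(h,\Id)<\epsilon_0$ sends $U$ into $U'$, and moreover $(f(x),g(x))\in W$ for every $x\in U$ whenever $f,g$ satisfy the hypothesis of the lemma. Then
$$\bigl(f(x),\xi(x)\bigr)\;:=\;\Psi^{-1}\bigl(f(x),g(x)\bigr)$$
defines $\xi(x)\in E_{f(x)}$ with $\Phi(f(x),\xi(x))=g(x)$. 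The resulting $\xi$ is holomorphic in $x$, it is unique by the injectivity of $\Psi$ on $E\cap\Omega$, and it depends continuously on $(f,g)$ since $\Psi^{-1}$ does.

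For the two-sided estimate, I would use that on the compact set $\overline{W}\cap(\overline{U'}\times\overline{U'})$ both $\Psi$ and $\Psi^{-1}$ are of class $C^1$ with uniformly bounded derivatives. Since $\Phi(x,0)=x$, a first order Taylor expansion in $v$ then yields constants $0<c_1<c_2$, independent of $x\in\overline{U'}$ and of $v\in E_x$ with $|v|<\epsilon_0$, such that
$$c_1|v|\;\le\;\dist\bigl(x,\Phi(x,v)\bigr)\;\le\;c_2|v|.$$
Specialising to $x=f(x)$ and $v=\xi(x)$ gives the claim with $d_0=\max\{c_2,c_1^{-1}\}$. The argument presents no real obstacle; the only point requiring care is to make the choices of $\Omega$, $U'$, and $\epsilon_0$ small enough simultaneously so that the parametric inverse function theorem, the inclusions $f(U),g(U)\subset U'$, and the derivative bounds all hold on the same compact set — a routine compactness exercise.
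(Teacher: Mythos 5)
Your overall strategy (a parametric inverse function theorem for $\Psi(x,v)=(x,\Phi(x,v))$, plus a compactness argument over $\overline{U'}$ for the uniform constants) is the natural one and is close to what the paper has in mind. But there is a genuine gap in where your section lives. Inverting $\Psi$ at the point $(f(x),g(x))$ produces a vector $\xi(x)\in E_{f(x)}$, i.e.\ a holomorphic section of the pullback bundle $f^*E$, not an element of $\Gamma^\infty(U,E)$, which requires $\xi(x)\in E_x$. Likewise, the uniqueness you obtain is uniqueness among vectors in $E_{f(x)}$, not among sections of $E|_U$, which is what the lemma asserts. This is not a cosmetic point: in the proof of Theorem \ref{Acta:theorem4.1} the lemma is used to define $\phi(\gamma,\xi)=\tilde\xi$ as a map into the \emph{fixed} Banach space $\Gamma^\infty_M(C',E)$, so that the implicit function theorem can be applied there; with values in the fibers $E_{f(x)}$ (where $f=\gamma\circ\alpha_\xi$ varies with $\gamma$ and $\xi$) the target space would vary, and the argument would not go through as stated. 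The fiber bookkeeping is exactly what the lemma is designed to settle, and it is the reason the paper inserts the remark that the fiberwise biholomorphism property of $\Phi$ holds for \emph{every} subbundle complementary to $E'$.

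The fix is a modest perturbation argument, but it must be made: for fixed $x$, invert the map $v\mapsto\Phi(f(x),v)$ restricted to the fixed subspace $E_x\subset\C^N$. Since $\partial_v\Phi(f(x),0)|_{E_x}$ converges to $\partial_v\Phi(x,0)|_{E_x}=\Theta(x)|_{E_x}$ (an isomorphism onto $T_xX$) as $\dist(f,\Id)\to 0$, uniformly for $x\in\overline{U}$, the subspace $E_x$ remains complementary to $\ker\partial_v\Phi(f(x),0)=E'_{f(x)}$ once $\epsilon_0$ is small, and a quantitative inverse function theorem (using uniform bounds on $\Phi$ and its derivatives over a compact neighbourhood of the zero section over $\overline{U'}$) gives a fixed radius $r>0$ such that $v\mapsto\Phi(f(x),v)$, $v\in E_x$, $|v|<r$, is injective and its image contains a ball of fixed radius around $f(x)$. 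Shrinking $\epsilon_0$ so that $\dist(f(x),g(x))$ lies below that radius yields existence and uniqueness of $\xi(x)\in E_x$ with $\Phi(f(x),\xi(x))=g(x)$, holomorphy in $x$ by the holomorphic inverse function theorem with parameters, and the two-sided estimate exactly as in your Taylor-expansion step (now applied at the base point $f(x)$ with $v\in E_x$). With this modification your proof is correct and coincides with the argument the paper leaves to the reader.
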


Taking $f=\Id_U$, the above lemma identifies a uniform neighbourhood of the identity
map on $U$ in the space $\Oscr(U,X)$ with a neighbourhood of the zero section
in the Banach space $\Gamma^\infty(U,E)$ (where we are using the notation \eqref{eq:notation}) by
\[
	\xi \in \Gamma^\infty(D,E) \mapsto f \in \Oscr(D,X),
	\qquad
	f_\xi(x)=\Phi(x,\xi(x)),\ \ \ x\in U.
\]
%
\begin{comment}
\cite[Proposition 3.3.1]{Forstneric2017E} there exist an open neighbourhood $\Omega\subset TX$
of the zero section in the tangent bundle of $X$ and a holomorphic map $\Phi : \Omega\to X$ that maps the fibre
$\Omega_x$ over any point $x\in X$ biholomorphically onto a neighbourhood of $x$ in $X$, with $\Phi(0_x)=x$.
Given a relatively compact domain $U\Subset X$, any holomorphic section $\xi:U\to \Omega|_U$
defines a holomorphic map $\alpha_\xi:U\to X$ given by 
\begin{equation}\label{eq:phi}
	\alpha_\xi(x)=\Phi(x,\xi(x)),\quad \ x\in U.
\end{equation}
Conversely, any holomorphic map $\alpha:U\to X$ sufficiently uniformly close to the identity map on $U$ 
equals $\alpha_\xi$ for a unique holomorphic section $\xi:U\to \Omega|_U$.
Hence, endowing $TX$ by a hermitian metric, we can identify a uniform neighbourhood 
of the identity map $\Id_U$ in the space $\Oscr(U,X)$ of holomorphic maps $U\to X$ 
with a neighbourhood of the zero section in the Banach space $\Gamma^\infty(U,TX)$. 
(We are using the notation \eqref{eq:notation}.) 
\end{comment}
%
Similarly, a uniform neighbourhood of the identity map $\Id_U$ in the subspace 
\[ %begin{equation}\label{eq:notation2}
	\Oscr_M(U,X)=\{f\in \Oscr(U,X) : f(x)=x\ \text{for}\ x\in M\cap U\} 
\] %end{equation}
is identified with a neighbourhood of the zero section in the Banach space $\Gamma^\infty_M(U,E)$.
Let
\[
	\Ascr : \Gamma^\infty_M(C',E) \to \Gamma^\infty_M(A',E),\qquad
	\Bscr : \Gamma^\infty_M(C',E) \to \Gamma^\infty_M(B',E)
\]
be bounded linear operators furnished by Lemma \ref{lem:linearsplitting}. 
If $\xi \in \Gamma^\infty_M(C',E)$ is sufficiently close to the zero section, we define holomorphic 
maps $\alpha_\xi \in \Oscr_M(A',X)$ and $\beta_\xi\in \Oscr_M(B',X)$, 
close to the identity on their respective domains, by  
\begin{equation}\label{eq:alphaxi}
	\alpha_\xi(x) =\Phi(x,(\Ascr \xi)(x))\ \ (x\in A'),\quad \ \beta_\xi(x)  = \Phi(x,(\Bscr \xi)(x))\ \ (x\in B').
\end{equation}
%(We are using the notation \eqref{eq:notation2}.) 
In particular, we may assume that
$\alpha_\xi(C')\subset \wt C$, and hence $\gamma\circ\alpha_\xi \in \Oscr_M(C',X)$ is
close to the identity on $C'$. Note that $\alpha_0=\Id_{A'}$ and $\beta_0=\Id_{B'}$.
Assuming that $\gamma\in \Oscr_M(\wt C,X)$ is sufficiently close to the identity on $\wt C$ and 
$\xi \in \Gamma^\infty_M(C',E)$ is small enough, Lemma \ref{lem:transition} gives 
a unique section $\tilde \xi \in \Gamma^\infty_M(C',E)$ such that 
\begin{equation}\label{eq:Phi-main}
	\Phi(\gamma\circ\alpha_\xi(x),\tilde \xi(x)) = \beta_\xi(x),\quad\ x\in C'.
\end{equation}
This defines a smooth map $\phi(\gamma,\xi)=\tilde \xi$ from a neighbourhood of 
$(\Id,0)$ in $\Oscr_M(\wt C,X)\times \Gamma^\infty_M(C',E)$ into  $\Gamma^\infty_M(C',E)$
such that $\phi(\Id,0)=0$ and $\di_\xi \phi(\Id,\xi)|_{\xi=0} = \Id$; see \cite[Eq.\ (9.20)]{Forstneric2017E}.
By the implicit function theorem, the equation $\phi(\gamma,\xi)=0$ admits a unique local solution 
$\xi=\psi(\gamma)\in \Gamma^\infty_M(C',E)$ for $\gamma\in \Oscr_M(\wt C,X)$ near $\Id$, 
with $\psi(\Id)=0$. From \eqref{eq:Phi-main} we see that the maps
$
	\alpha_{\psi(\gamma)}\in \Oscr_M(A',X)
$
and $\beta_{\psi(\gamma)}\in \Oscr_M(B',X)$  \eqref{eq:alphaxi} satisfy 
\[
	\gamma\circ \alpha_{\psi(\gamma)}  =
	\Phi(\gamma\circ \alpha_{\psi(\gamma)},0) = \beta_{\psi(\gamma)}
	\ \ {\rm on}\ \ C'.
\]
The estimates in \eqref{eq:linearestimate} follow by observing 
that $\gamma\mapsto \alpha_{\psi(\gamma)}$ and $\gamma\mapsto \beta_{\psi(\gamma)}$ 
are given by smooth (not necessarily linear) operators. 
The maps $\alpha_{\psi(\gamma)}$ and $\beta_{\psi(\gamma)}$ 
% $\alpha_{\psi(\gamma)}\in \Oscr_M(A',X)$ and $\beta_{\psi(\gamma)}\in \Oscr_M(B',X)$
need not be biholomorphic; however, the estimates \eqref{eq:linearestimate} imply that they are 
biholomorphic on a pair of slightly smaller domains provided $\gamma$ is close enough to the
identity on $\wt C$.  
\end{proof}

%%%%%%%%%%%%%%%%%%%%%%%%%%%%%%%%%%%%%%%%%%
%
%   SECTION 4: PROOF OF THEOREM 1.4 - THE TUBULAR NEIGHBOURHOOD THEOREM
%
%%%%%%%%%%%%%%%%%%%%%%%%%%%%%%%%%%%%%%%%%%
\section{Proof of Theorem \ref{th:tubular}}\label{sec:proof-tubular}

By Theorem \ref{th:Steinneighbourhoods} and Lemma \ref{lem:admissible} we may assume that the manifold
$X$ is Stein, the set $K$ is $\Oscr(X)$-convex, and the Stein submanifold $M$ is closed in $X$. 
Let $\Omega_0 \subset X$ and $\Phi_0:\Omega_0 \cup M\to Y$ be as in Theorem \ref{th:tubular}. 
We shall focus on the case when $\Phi_0$ is injective and $f=\Phi_0|_M:M\to f(M)=N\subset Y$
is an embedding; an obvious minor modification of the proof will apply to immersions.

A Stein neighbourhood $\Omega\subset X$ of $K\cup M$ and a 
biholomorphic map $\Phi:\Omega\to \Phi(\Omega)\subset Y$ satisfying the theorem 
will be constructed by a sequence of extensions of the initial map $\Phi_0$ to larger domains in $X$, 
which are open neighbourhoods of sets of the form $K\cup L$, where $L\subset M$ is a compact,
strongly pseudoconvex, and $\Oscr(M)$-convex domain containing $K\cap M$ in its relative interior.
At every step we match the given map $\Phi_0$ on $M$ and
approximate the previous map in the sequence on the previous domain. 
Every map in the sequence is covered along $M$ by an isomorphism of the 
normal bundles $\nu_{M,X}\to \nu_{N,Y}$ which is homotopic to the initial one, $\Theta$.
The scheme of proof follows that of \cite[Proposition 5.12.1]{Forstneric2017E}; see in particular 
\cite[p.\ 251]{Forstneric2017E}. The induction consists of two kinds of steps: 
the {\em noncritical case} and the {\em critical case}. We begin by describing the former.

Pick a strongly plurisubharmonic Morse exhaustion function $\rho:M\to\R$ such that 
$\rho<0$ on $K\cap M$, $\rho>0$ on $M\setminus \overline\Omega_0$, and $0$ is a regular
value of $\rho$. Choose a compact $\Oscr(X)$-convex set $K'\subset \Omega_0$
containing $K$ in its interior and such that $\rho<0$ on $K'$.
The noncritical case amounts to the following lemma, which will be
used with different pairs of sets $K$ and $L$.

%
%   LEMMA: THE NONCRITICAL CASE
%
\begin{lemma}\label{lem:noncritical}
(Assumptions as above). Assume that $\rho$ has no critical values on $[0,c]$ for some $c>0$.
Set $L=\{x\in M:\rho(x)\le c\}$. Then we can approximate $\Phi_0$ as closely as desired uniformly
on $K'$ by a biholomorphic map $\Phi_1:\Omega_1\to \Phi_1(\Omega_1)\subset Y$
from a neighbourhood $\Omega_1\subset X$ of $K'\cup L$ such that $\Phi_1$ agrees with 
$\Phi_0$ on $M\cap \Omega_1$.
\end{lemma}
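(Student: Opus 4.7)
The plan is to extend $\Phi_0|_M$ to a biholomorphism $\psi$ defined on a full neighbourhood of $M$ in $X$, then glue $\Phi_0$ (on a neighbourhood of $K'$) with $\psi$ (on a neighbourhood of $L$) into a single biholomorphism on a neighbourhood of $K'\cup L$ by means of the splitting lemma with interpolation on $M$ (Theorem \ref{Acta:theorem4.1}). Because the transition map in that splitting must be uniformly close to the identity, the gluing will be performed inductively along a finite noncritical filtration of $\rho$ on $[0,c]$ rather than in one step.

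To construct $\psi$, I would first apply Theorem \ref{th:maxrank}(a) to homotope the topological bundle isomorphism $\Theta:\nu_{M,X}\to\nu_{N,Y}$ to a holomorphic one, keeping it fixed on a neighbourhood of $K\cap M$ where it already equals $d\Phi_0$. Composing with the Docquier--Grauert tubular neighbourhood identifications of $M\subset X$ and $N\subset Y$ with the zero sections of their normal bundles yields a biholomorphic map $\psi:W\to\psi(W)\subset Y$ on an open $W\supset M$ with $\psi|_M=\Phi_0|_M$. After shrinking $W$ thinly around $M$ and $\Omega_0$ around $K'$, $\psi$ will be uniformly close to $\Phi_0$ on $W\cap\Omega_0$ (both agree on $M$, and to first order along $K\cap M$ by the choice of $\Theta$).

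Next I would subdivide $[0,c]$ as $0=t_0<t_1<\cdots<t_N=c$ with small mesh and set $L_i=\{\rho\le t_i\}$. Inductively I produce biholomorphic maps $\Phi_i:\Omega_i\to Y$ on open neighbourhoods $\Omega_i\supset K'\cup L_i$ agreeing with $\Phi_0$ on $M\cap\Omega_i$, with $\sup_{K'}\dist(\Phi_i,\Phi_{i-1})$ controlled by a pre-chosen summable sequence. At step $i$, I select a compact $A\subset\Omega_i$ containing $K'\cup L_i$ in its interior together with a thin strongly pseudoconvex neighbourhood $B\subset W$ of the shell $L_{i+1}\setminus\mathrm{int}(L_i)$, arranged so that $(A,B)$ is a Cartan pair (using the noncritical structure on $[t_i,t_{i+1}]$ and \cite[Proposition 5.7.3]{Forstneric2017E}) with $A\cup B\supset K'\cup L_{i+1}$. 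On a neighbourhood $\widetilde C\supset C=A\cap B$ the map $\gamma:=\psi^{-1}\circ\Phi_i$ is a biholomorphism into $X$ fixing $\widetilde C\cap M$. Theorem \ref{Acta:theorem4.1} will then yield injective holomorphic maps $\alpha:A'\to X$ and $\beta:B'\to X$, each fixing $M$ and close to the identity, with $\gamma\circ\alpha=\beta$ on $C'=A'\cap B'$, equivalently $\Phi_i\circ\alpha=\psi\circ\beta$. I define $\Phi_{i+1}:=\Phi_i\circ\alpha$ on $A'$ and $\Phi_{i+1}:=\psi\circ\beta$ on $B'$; these glue to a biholomorphism on $A'\cup B'\supset K'\cup L_{i+1}$, coincide with $\Phi_0$ on $M$ (since $\alpha,\beta$ fix $M$), and are close to $\Phi_i$ on $K'$.

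The main difficulty will be ensuring that $\dist_{\widetilde C}(\gamma,\Id)$ stays within the threshold $\epsilon_0$ of Theorem \ref{Acta:theorem4.1} at every step. Because $\gamma$ fixes $M$ pointwise and varies smoothly transverse to $M$, this reduces to taking both the mesh $t_{i+1}-t_i$ and the normal thickness of $B$ small enough, which is possible uniformly in $i$ since $\rho$ has no critical values on the compact interval $[0,c]$. The subsidiary tasks of verifying that each $(A,B)$ is a genuine Cartan pair and that $\Omega_i$ can be chosen to contain a neighbourhood of $C$ are handled by \cite[Proposition 5.7.3]{Forstneric2017E} and by Theorem \ref{th:Steinneighbourhoods} applied to $K\cup M$, respectively. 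After $N$ steps the map $\Phi_1:=\Phi_N$ will satisfy the conclusion of the lemma, with the compounding error on $K'$ bounded by the sum of the individual step errors.
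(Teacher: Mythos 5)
Your overall architecture (finite induction over a noncritical filtration, gluing two injective pieces that both fix $M$ via Theorem \ref{Acta:theorem4.1}) is the right shape, and the construction of the global tubular map $\psi$ from $\Theta$ via Theorem \ref{th:maxrank}(a) and Docquier--Grauert is legitimate. The gap is at the step you yourself flag as "the main difficulty", and your resolution of it is circular. The threshold $\epsilon_0$ and the constant $c_0$ in Theorem \ref{Acta:theorem4.1} are attached to the Cartan pair $(A,B)$ (they come from the norms of the bounded linear operators of Lemma \ref{lem:linearsplitting}, i.e.\ from sup-norm bounded $\dibar$-solution and Henkin extension operators on the strongly pseudoconvex domain $A'\cup B'$). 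Once the pair is fixed, $\dist_{\wt C}(\psi^{-1}\circ\Phi_i,\Id)$ is a fixed positive number: $\psi$ and $\Phi_0$ agree only on $M$ (and to first order near $K\cap M$), so on a collar of \emph{fixed} normal thickness their discrepancy cannot be decreased by any further choice. If instead you shrink the mesh and the normal thickness of $B$ to gain closeness, you change the Cartan pair, hence $\epsilon_0$ and $c_0$; for the argument to close you would need these constants to be uniform (or at least to degrade slower than the closeness you gain) over the family of thinner and thinner flanged domains $A'\cup B'$, and nothing in the paper or in your proposal provides such uniformity. "Small enough, uniformly in $i$, since $\rho$ has no critical values" does not address this, because noncriticality controls the geometry of the shells in $M$ but not the behaviour of the splitting operators as the collar degenerates.

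The paper's proof avoids this issue by never comparing $F_i$ with a prefabricated global tubular map. It attaches convex bumps $B_i$ \emph{inside} $M$ (so that $C_i=A_i\cap B_i\subset M$, by Lemma \ref{lem:hc}), writes $F_i$ near each bump in the normal form \eqref{eq:F} in coordinates where $M=\{z''=0\}$, and produces the new piece $\wt F$ over the bump by approximating the coefficient maps: $G''$, which takes values in $GL_d(\C)$, is approximated by Grauert's Oka principle, and $G'$ by Oka--Weil. For a \emph{fixed} Cartan pair this approximation can be made as close as desired, so the transition map $\gamma=F^{-1}\circ\wt F$ lands below the (already fixed) threshold of Theorem \ref{Acta:theorem4.1}, the interpolation $\wt F=\Phi_0$ on $M$ being built into the form \eqref{eq:F}. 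That "closeness by approximation on fixed geometry, rather than by shrinking the geometry" is the idea your proposal is missing; without it (or without genuinely uniform estimates for Lemma \ref{lem:linearsplitting} on thin collars, which would require a separate proof), the inductive step does not go through.
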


\begin{proof}
By \cite[Lemma 5.10.3]{Forstneric2017E} there is a finite increasing sequence of 
compact, strongly pseudoconvex and $\Oscr(M)$-convex domains in $M$:
\[
	\{\rho\le 0\} = L_0 \subset L_1 \subset \cdots \subset L_k=L=\{\rho\le c\},
\]
such that for every $i=0,\ldots,k-1$ we have $L_{i+1}=L_i\cup B_i$, where $B_i\subset M$ is a 
{\em convex bump} attached to $L_i$. This means that $(L_i,B_i)$ is a Cartan pair
(see Def.\ \ref{def:Cartan-pair}), and there is a holomorphic chart on a neighbourhood of 
$B_i$ in $M$ in which $B_i$ and $C_i=L_i\cap B_i$ are strongly convex domains in $\C^m$
with $m=\dim M$. 
%In addition, we can choose $B_i$ to be $\Oscr(M)$-convex, and hence also $\Oscr(X)$-convex. 
Set $A_i=K'\cup L_i$. By Lemma \ref{lem:hc}, $(A_i,B_i)$ is a Cartan pair
and $C_i=A_i\cap B_i =L_i\cap B_i \subset M$.
Note that $K'\cup L_0=A_0\subset A_1\subset \cdots \subset A_k=K'\cup L$.

We prove the lemma by a finite induction. Assume that for some $i\in \{0,1,\ldots,k-1\}$
we have a neighbourhood $U_i\subset X$ of $A_i$ and a biholomorphic map 
$F_i:U_i\to V_i=F_i(U_i)\subset Y$ which agrees with $\Phi_0$ on $M\cap U_i$.
We wish to find the next biholomorphic map $F_{i+1}:U_{i+1}\to V_{i+1}\subset Y$ 
on a neighbourhood $U_{i+1}\subset X$ of $A_{i+1}$ which agrees with $\Phi_0$ on $M\cap U_{i+1}$
and approximates $F_i$ on $K'$ as closely as desired. Assuming that the induction works,
the map $F_k$ can then be taken as $\Phi_1$ in the lemma.

Fix $i\in \{0,1,\ldots,k-1\}$ and write $A_i=A$, $B_i=B$, $C_i=C$, and $F_i=F$. 
Let $m=\dim M$. The conditions imply that there are holomorphic coordinates 
\begin{equation}\label{eq:localcoord}
	z=(z',z''),\quad z'=(z_1,\ldots, z_m)= x'+\imath y',\quad z''=(z_{m+1},\ldots,z_n)=x''+\imath y''
\end{equation}
on an open neighbourhood $W$ of $B$ in $X$ 
such that $M\cap W=\{z''=0\}$ and the sets $B\subset C$ are strongly convex
domains in $\C^m_{z'}\times \{0\}^{n-m}$. Set $d=n-m$. 
By \cite[Proposition 5.7.3]{Forstneric2017E} we can approximate $(A,B)$ from the outside 
by a Cartan pair $(A',B')$ consisting of compact strongly pseudoconvex domains in $X$ such that 
$F$ is holomorphic on a neighbourhood of $A'$ and $B'\subset W$. 
Likewise, there are local holomorphic coordinates $w=(w',w'')$ around the set 
$F(B) = \Phi(B) \subset N=\Phi(M)$ in $Y$ such that $N$ corresponds to $w''=0$
and $F|_B$ is the identity map $(z',0'') \mapsto (z',0'')$. In this pair of coordinates, $F$ is of the form
\begin{equation}\label{eq:F}
	F(z',z'')  = \bigg( z' + \sum_{j=m+1}^n z_j \, G'_j(z',z''), \sum_{j=m+1}^n z_j \, G''_j(z',z'') \bigg)
\end{equation}
where the holomorphic map $G''=(G''_{m+1},\ldots, G''_n)$ with values in $\C^{d^2}$ has maximal rank $d=n-m$ 
at every point $(z',0'') \in C\times \{0''\}$, and hence in a neighbourhood of this set. By shrinking $A'$ and 
$B'$ around $A$ and $B$, respectively, we may assume that this holds on $C'=A'\cap B'$. 
In other words, $G''$ assume values in $GL_d(\C)$. By a theorem due to 
Grauert \cite{Grauert1957I} (see also \cite[Proposition 5.6.1]{Forstneric2017E}), 
we can approximate $G''$ as closely as desired on a neighbourhood of $C'$ by a holomorphic map 
$\wt G'':B'\to GL_d(\C)$. Similarly, we can approximate the holomorphic map $G'=(G'_{m+1},\ldots,G'_n)$ 
with values in $\C^{dm}$ on a neighbourhood of $C'$ by a holomorphic map $\wt G'$ on a 
neighbourhood of $B'$. Inserting these two maps into the equation \eqref{eq:F} defines a 
holomorphic map $\wt F$ on a neighbourhood of $B'$
which is close to $F$ on a neighbourhood of $C'$, it agrees with $F$ on $M\cap C'$, and 
is biholomorphic on a neighbourhood of $B$ in $X$. On a neighbourhood of $C'$ we thus have  
\[
	\wt F = F\circ \gamma,
\]
where $\gamma$ is an injective holomorphic map close to the identity on a neighbourhood of $C'$
which agrees with the identity on $C'\cap M$. Assuming that the approximations are close enough,
Theorem \ref{Acta:theorem4.1} gives a splitting 
\[
	\text{$\gamma\circ\alpha = \beta$\ \ \ on $C'$}, 
\]
where $\alpha:A'\to X$ and $\beta:B'\to X$ are injective holomorphic maps 
close to the identity, which agree with the identity on $A'\cap M$ and $B'\cap M$, respectively. Then, 
\[
	\wt F\circ \alpha = F\circ \beta\quad \text{on $C'$}.
\]
This defines the next holomorphic map $F_{i+1}$ in the sequence which is injective holomorphic on a neighbourhood
of $A_{i+1}=K'\cup L_{i+1}$, it approximates $F=F_i$ as closely as desired on $K'$, 
and it agrees with $\Phi_0$ on the intersection of its domain with $M$. This completes
the induction step and hence proves the lemma. 
\end{proof}

%
%   THE CRITICAL CASE
%
We now consider the {\em critical case}, which amounts to passing a critical value $c_0>0$ 
of the Morse exhaustion function $\rho:M\to \R$. Assume inductively that $F:W_0\to F(W_0)\subset Y$ 
is an injective holomorphic map on a neighbourhood $W_0\subset X$ 
of $K'\cup L$, where $L=\{\rho\le c\}$ for some $c<c_0$ close to $c_0$ such that $\rho$ has no 
critical values in $[c,c_0)$ and there is precisely one (Morse) critical point $p_0\in M$ with $\rho(p_0)=c_0$.
%
% ; see \cite[proof of Proposition 5.12.1]{Forstneric2017E} for more details. 
%
Let $k\in \{0,1,\ldots,m\}$ denote the Morse index of $\rho$ at $p_0$.
By \cite[Lemma 3.10.1]{Forstneric2017E}, there are local holomorphic coordinates 
\eqref{eq:localcoord} on a convex neighbourhood $U\subset X$ of the point $p_0$
(i.e., such that $z(U)\subset \C^n$ is convex), with $z(p_0)=0$ and $M\cap U=\{z''=0\}$, such that
\[
	\rho(z') = c_0 - \sum_{j=1}^k x_j^2 + \sum_{j=k+1}^m x_j^2 + 
	\sum_{j=1}^m \lambda_j y_j^2 + o(|z'|^2),
\]
where $\lambda_j\ge 1$ for all $j$ and $\lambda_j > 1$ for $j=1,\ldots,k$.
Furthermore, we can choose $\rho$ such that the remainder term $o(|z'|^2)$
vanishes (see \cite[Lemma 3.10.3]{Forstneric2017E}). 
The normal bundle to $M$ in these coordinates is the $z''$-space.

If $k=0$ then $p_0$ is a local minimum of $\rho$, and in this case a new connected component
of the sublevel set $\{\rho<c\}$ appears at $p_0$ as $c$ passes the value
$\rho(p_0)=c_0$. We can extend $F$ as a biholomorphic map on the product 
of this new component with a ball in the $z''$-direction such that $F$ agrees with 
$\Phi_0$ on the $z'$-space and its normal component is determined by the isomorphism 
$\Theta$ of the normal bundles. 

Assume now that $k\in \{1,\ldots,m\}$. Let the constant $c<c_0$ be as above. Set
\[
	E=\Big\{(x'+\imath 0',0''): \sum_{j=1}^k x_j^2\le c_0-c,\ x_{k+1}=\cdots=x_m=0\Big\}.
\]
This is a linear totally real $k$-disc attached with its boundary sphere $bE\cong S^{k-1}$
to the boundary $bL=\{\rho = c\}$ of the domain $L=\{\rho\le c\}\subset M$. Choosing local coordinates 
$w$ on the target side as above, the map $F$ is of the form \eqref{eq:F} on a 
neighbourhood of $K'\cup L$. Let $W$ be a compact strongly pseudoconvex domain such that 
$K'\cup L\subset W\subset W_0$, $bL=\{\rho=c\} \subset bW$, and $E$ intersects $bW$ 
transversely along the sphere $bE\subset bW$. 
In fact, we can extend $\rho$ to a strongly plurisubharmonic function on $U$ by setting 
\[
	\wt \rho(z)=\rho(z') + C|z''|^2
\]
for a big $C>0$ and choose $W$ such that $W\cap U=\{\wt\rho \le c\}$. 
The situation is illustrated in \cite[Fig.\ 5.3, p.\ 246]{Forstneric2017E}.
Pick a small $s>0$ and let $\Sigma$ denote the linear totally real strip 
\[
	\Sigma=\Big\{(x'+\imath 0',x''+\imath 0'') \in U: x_{k+1}=\cdots=x_m=0,\ \ 
	\sum_{j=m+1}^n x_j^2 \le s \Big\}.
\]
If $s>0$ is small enough then $S=W\cup \Sigma$ is a Stein compact in $X$.
(See \cite[Lemma 3.9.3]{Forstneric2017E} which is due to Eliashberg \cite{Eliashberg1990}. 
More precise constructions of this kind can be found in the monograph \cite{CieliebakEliashberg2012}
by Cieliebak and Eliashberg and in the paper \cite{ForstnericKozak2003}.)  
This means that $S$ is a {\em strongly admissible set} 
in the sense of \cite[Definition 5, p.\ 156]{FornaessForstnericWold2020}. 

By using the isomorphism $\Theta$ of the complex normal bundles over $F$
(which in the given local coordinates correspond to the $z''$-space), 
we can extend $F$ to a smooth map on $S=W\cup \Sigma$
which agrees with the given map on a neighbourhood of $W$ and on 
$M$, and it is a smooth injective immersion on the strip $\Sigma\setminus W$.
By the version of Mergelyan theorem in \cite[Corollary 9, p.\ 178]{FornaessForstnericWold2020}, 
we can approximate this extended map $F$ in the $\Cscr^r$-topology on $S\cap U$ for any $r\in\N$
by a holomorphic map $\wt F$ on a neighbourhood of $W\cap U$ such that $\wt F=\Phi_0$ holds
on $M\cap U$. (In order to ensure this interpolation condition, we actually approximate the map 
$G=(G',G'')$ which appears in \eqref{eq:F}, like we did in the noncritical
case but now using the cited Mergelyan theorem. As in the noncritical case, 
$G''$ assumes values in $GL_d(\C)$.) Assuming that the approximation is close
enough in $\Cscr^1(S\cap U)$, the map $\wt F$ is injective holomorphic
on a neighbourhood of $S\cap U$. 
We then form a Cartan pair $(A,B)$ where $A=W$, $B$ is a neighbourhood of 
$\overline {\Sigma\setminus W}$, and $D=A\cup B$ is a strongly pseudoconvex 
neighbourhood of $S=W\cup \Sigma$. (Such Cartan pairs were constructed by
Henkin and Leiterer in \cite{HenkinLeiterer1998}.) Finally, we use Theorem \ref{Acta:theorem4.1} 
to glue $F$ with $\wt F$ into an injective holomorphic map $F'$ on a neighbourhood of $S$ which 
agrees with $\Phi_0$ on $M$. The details of this gluing are essentially the same as in the 
noncritical case explained above.

This procedure extends the map to a neighbourhood of domain in $M$ obtained by attaching to 
$\{\rho\le c\}$ a handle which effects 
the change of topology of the sublevel set of $\rho$ at the critical point $p_0$. 
In order to extend the map to a neighbourhood of a sublevel set $\{\rho\le c'\}$ for some 
$c'>\rho(p_0)$, we apply the noncritical case with a different strongly plurisubharmonic function 
$\tau$ as described in \cite[p.\ 247]{Forstneric2017E}, where the appropriate function $\tau$ is given
 by \cite[Lemma 3.11.1]{Forstneric2017E}. 

To prove Theorem \ref{th:tubular}, we alternately use the noncritical and the critical case
developed above in a well known procedure; see e.g.\ \cite[proof of Proposition 5.12.1]{Forstneric2017E}.

%
%
%   SECTION 5: PROOF
%
%
\section{Proof of Theorem \ref{th:main} and a generalization} 
%to manifolds with the density property}
\label{sec:proof}

In this section we prove Theorem \ref{th:main}. We then show that essentially the same proof
applies to any Stein manifold with the density property in place of the Euclidean space $\C^n$. 

We recall the following special case of \cite[Theorem 15]{ForstnericRitter2014}.
In order to adjust the notation to our situation, the Stein manifold $X$ in the cited paper is now called $M$,
the compact set $K\subset X$ is replaced by $K'$, and the subvariety $X'$ is empty.

%
%   Theorem 15 of F. and Ritter, MZ, 2014
%
\begin{theorem}\label{th:FR2014}
Let $L$ be a compact polynomially convex set in $\C^n$ for some $n>1$. Let $M$ be a Stein manifold
with $2\dim M\le n$, $K'$ be a compact $\Oscr(M)$-convex set, $U\subset M$ be an open set containing $K'$, 
and $f :U \to\C^n$ be a holomorphic map such that $f(bK') \cap L =\varnothing$. 
Then for every $\epsilon >0$ there exists a proper holomorphic immersion $F: M\to \C^n$ 
(embedding if $2\dim M+1\le n$) satisfying 
\[
	(a)\ F(M\setminus K') \subset \C^n\setminus L, \qquad (b)\ ||F-f||_{K'}<\epsilon.
\]
\end{theorem}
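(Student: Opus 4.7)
The plan is to construct $F$ as the uniform-on-compacts limit of a sequence of holomorphic maps $F_j$ defined on open neighbourhoods of an exhaustion
\[
    K' = K_0 \subset K_1 \subset K_2 \subset \cdots, \qquad K_j \subset \mathrm{int}\, K_{j+1}, \qquad \bigcup_{j} K_j = M,
\]
of $M$ by $\Oscr(M)$-convex compact sets. First I would replace $f$ by a map $F_0$ holomorphic on a neighbourhood of $K'$ that approximates $f$ within $\epsilon/2$ on $K'$, and that is a local immersion (embedding if $n\ge 2\dim M+1$) there by a small generic perturbation using the dimension hypothesis. The inductive aim is that $F_{j+1}$ satisfy (a) $\|F_{j+1}-F_j\|_{K_{j-1}} < \epsilon\, 2^{-j-1}$, so that $F=\lim F_j$ exists and is holomorphic on $M$; (b) $F_{j+1}(K_{j+1}\setminus K')\cap L=\varnothing$, giving avoidance in the limit; and (c) $F_{j+1}(bK_{j+1})\subset \{|w|\ge j+1\}$ in $\C^n$, which forces $F$ to be proper.

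The inductive step combines the Oka property of $\C^n$ with the Andersén--Lempert theorem. I would first extend $F_j$ holomorphically across the Cartan pair $(K_j, \overline{K_{j+1}\setminus K_j})$, obtaining $\wt F_j$ on a neighbourhood of $K_{j+1}$; this is a standard application of Oka--Cartan extension since $\C^n$ is Oka and $K_{j+1}$ is $\Oscr(M)$-convex. Next I would select a holomorphic automorphism $\Psi$ of $\C^n$ that is uniformly close to the identity on a neighbourhood of $F_j(K_{j-1})$ while moving $\wt F_j(K_{j+1}\setminus K_j)$ out of $L$ and out of the ball of radius $j+1$. The existence of such $\Psi$ is exactly the content of Andersén--Lempert applied to a polynomially convex separating configuration: it can be arranged because the hypothesis $f(bK')\cap L=\varnothing$, together with inductive control keeping $F_j$ close to $f$ on $K'$, allows a polynomially convex disjoint thickening of $F_j(K_{j-1})$ from the relevant part of $L$. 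Then set $F_{j+1} = \Psi\circ \wt F_j$, and apply one more small perturbation to preserve the immersion/embedding property via transversality, using $n\ge 2\dim M$ (resp.\ $n\ge 2\dim M+1$).

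The main obstacle will be ensuring, at every stage, that the Andersén--Lempert move $\Psi$ simultaneously avoids $L$ on the new annulus $K_{j+1}\setminus K_j$ and pushes the outer boundary past radius $j+1$, all while perturbing $F_j$ negligibly on the large compact $K_{j-1}$. This hinges on engineering the polynomial-convexity configuration described above, using the boundary separation $f(bK')\cap L=\varnothing$ as a buffer whose preservation is built into the inductive tolerances. Once that is arranged, the limit $F$ inherits approximation on $K'$ from (a), the avoidance $F(M\setminus K')\subset \C^n\setminus L$ from (b), properness from (c), and is an immersion (embedding) from the transversality control, establishing the theorem.
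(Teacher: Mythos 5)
The paper does not actually prove this statement: it is quoted verbatim as a special case of \cite{ForstnericRitter2014}, Theorem 15 (with the borderline case $2\dim M=n$ completed by Lemma 3.1 of \cite{Forstneric2019JAM}), so your sketch has to be measured against that proof, and it misses its central idea. The crux is how to extend the map across each new bump/annulus so that the image of the \emph{entire} added piece lies in $\C^n\setminus L$ and outside a large ball. Your plan — extend by Oka--Weil across the Cartan pair to get $\wt F_j$ and then correct by an automorphism $\Psi$ of $\C^n$ close to the identity near $F_j(K_{j-1})$ — fails at exactly this point. Oka--Weil gives no control of $\wt F_j$ on $K_{j+1}\setminus K_j$: points of the new annulus may be mapped into $L$ \emph{inside} the neighbourhood of $F_j(K_{j-1})$ on which $\Psi$ must be close to $\Id$ (note that $F_j(K')$ is allowed to meet $L$; only $f(bK')\cap L=\varnothing$ is assumed), and no automorphism which is approximately the identity there can expel such points from $L$. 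Moreover $\Psi$ acts on the target, so it cannot move image points of the annulus independently of nearby image points of $K_{j-1}$; and away from the region where it is controlled, $\Psi$ can push the previously arranged annuli $K_j\setminus K_{j-1}$, whose images avoided $L$, back into $L$, destroying your inductive condition (b). This is why the actual proof does not rely on Anders\'en--Lempert compositions: it builds the extension over each convex bump with image already contained in $\C^n\setminus L$, using an Oka-type property of the complement of $L$ (the main content of \cite{ForstnericRitter2014}; for general polynomially convex $L$ this is Kusakabe's theorem \cite{Kusakabe2020complements}), glues in the \emph{source} by a Cartan-pair splitting, and obtains properness by the method of \cite{DrinovecForstneric2010AJM} which pushes the image of the whole annulus out of a large ball. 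That input cannot be conjured from Anders\'en--Lempert alone.

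There are also quantitative slips that would sink the limit argument even if the inductive step worked. Properness does not follow from $F_{j+1}(bK_{j+1})\subset\{|w|\ge j+1\}$: a lower bound on the boundary gives no lower bound on the annulus $K_{j+1}\setminus K_j$ (there is no minimum principle for $|F|$), so you need $|F_{j+1}|>j$ on all of $K_{j+1}\setminus \mathrm{int}\, K_j$, preserved by later corrections. Likewise, avoidance of $L$ in the limit is not guaranteed by the a priori tolerances $\epsilon\, 2^{-j-1}$: the distance from $F_{j+1}(K_{j+1}\setminus K')$ to $L$ may be far smaller than the tail of that series, so the tolerances must be chosen adaptively after each step. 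Finally, in the case $2\dim M+1\le n$, injectivity of the limit map requires its own inductive control (quantitative injectivity on each $K_j$), not merely a generic transversality perturbation at each stage.
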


As pointed out in \cite[Remark 1.3]{Forstneric2019JAM}, the proof of this result in 
\cite{ForstnericRitter2014} is incomplete in the borderline case $2\dim M =n$, 
but it is completed by applying \cite[Lemma 3.1]{Forstneric2019JAM}.  
 
\begin{proof}[Proof of Theorem \ref{th:main}] 
By Lemma \ref{lem:admissible}, we may assume that the manifold $X$ is Stein, $M$ is a closed 
Stein submanifold of $X$, and $K$ is $\Oscr(X)$-convex.
Let $\Omega_0\subset X$ be a neighbourhood of $K$ and 
$\Phi_0:\Omega_0\to \Phi_0(\Omega_0)\subset\C^n$ be a biholomorphic map as in the theorem
such that $L:=\Phi_0(K)\subset \C^n$ is polynomially convex. Choose a slightly bigger compact 
$\Oscr(X)$-convex domain $K_1\subset \Omega_0$ containing $K$ 
in its interior and set $K'=K_1\cap M$ and $U=\Omega_0\cap M$. The holomorphic embedding 
$f:=\Phi_0|_U:U\to\C^n$ then satisfies the assumptions of Theorem \ref{th:FR2014}.
An inspection of the proof of this result in \cite{ForstnericRitter2014} shows that 
there is a proper holomorphic immersion $F:M\to \C^n$ (embedding if $2\dim M+1\le n$)
satisfying conditions (a) and (b) in the theorem such that $F$ is also  
holomorphic and uniformly close to $\Phi_0$ on a neighbourhood of the set $K_1$ in $X$.
This is seen by the standard construction in Oka theory; cf.\ \cite[proof of Proposition 5.12.1]{Forstneric2017E}. 
The dimension condition $2\dim M\le n$ and the hypothesis that the restricted tangent
bundle $TX|_M$ is trivial implies by Corollary \ref{cor:stablenormabundle} that 
the immersion $F|_M:M\to\C^n$ is covered by an isomorphism of the normal bundles
$\nu_{M,X}\to \nu_F$, which can be chosen to agree with the one induced by $dF$ 
over $K_1\cap M$. Hence, the conclusion of the theorem follows by applying
Theorem \ref{th:tubular} to extend $F$, by approximation on $K_1$, 
to a holomorphic immersion $\Phi:\Omega\to\C^n$
(embedding if $2\dim M+1\le n$) from a neighbourhood of $K_1\cup M$ in $X$. 
\end{proof}

%%%%%%%%%%%%%%%%%%%%%%%%%
%
%    MANIFOLDS WITH DENSITY PROPERTY
%
%%%%%%%%%%%%%%%%%%%%%%%%%
The above proof uses two main ingredients, namely Theorem \ref{th:FR2014} from 
the paper \cite{ForstnericRitter2014} (and its extension explained in the proof)
and Theorem \ref{th:tubular} proved in Section \ref{sec:proof-tubular}.
The former result has been generalized in the papers 
\cite{AndristForstnericRitterWold2016,Forstneric2019JAM} to the case when 
$\C^n$ is replaced by an arbitrary $n$-dimensional Stein manifold with the 
holomorphic density or volume density property. Let us recall the definition. 

A holomorphic vector field on a complex manifold is said to be complete if its 
flow is defined for all complex values of time. A complex manifold $Y$ is said to enjoy 
the {\em density property} if every holomorphic vector field on $Y$ can be approximated 
uniformly on compacts by sums of $\C$-complete holomorphic vector fields. 
The {\em volume density property} refers to the analogous property for holomorphic
vector fields having divergence zero with respect to a holomorphic volume form
on the manifold. This notion, introduced by Varolin in \cite{Varolin2000,Varolin2001}, plays a major role
in contemporary complex analysis. On Stein manifolds, the density property implies the 
{\em Andersen-Lempert property} on approximation of isotopies of biholomorphic maps 
on compact holomorphically convex sets by holomorphic automorphisms of the manifold
(see \cite[Theorem 4.10.5]{Forstneric2017E}). In many respects, Stein manifolds with the 
density (or the volume density) 
property behave like Euclidean spaces, except that they may be topologically nontrivial. 
For surveys of this topic, see \cite[Sect.\ 10]{Forstneric2017E} 
and the paper by Kaliman and Kutzschebauch \cite{KalimanKutzschebauch2015}. 
A list of examples of Stein manifolds with the density property is also provided 
in \cite[Example 1.3]{AndristForstnericRitterWold2016}. 

By following the proof of Theorem \ref{th:main} but replacing the use of Theorem \ref{th:FR2014}
by the analogous results in \cite{AndristForstnericRitterWold2016,Forstneric2019JAM},
one obtains the following generalization of Theorem \ref{th:main}.
We leave the details of the proof to the reader. 

%
%   MAIN THEOREM 2 - MANIFOLDS WITH DENSITY PROPERTY
%
\begin{theorem}\label{th:density}
Let $X$ be a complex manifold, $M$ be a Stein submanifold of $X$, and $K$ be a compact set
such that $K\cap M$ is a compact $\Oscr(M)$-convex set. Assume that 
\begin{enumerate}[\rm (a)]
\item $Y$ is a Stein manifold of dimension $\dim Y=\dim X>1$ having the density or the volume density property, 
\item $\Omega_0\subset X$ is an open neighbourhood of $K$
and $\Phi_0:\Omega_0\stackrel{\cong}{\to} \Phi_0(\Omega_0)\subset Y$ is a biholomorphic 
map such that $\Phi_0(K)$ is $\Oscr(Y)$-convex, and 
\item $\Phi_0|_{M\cap\, \Omega_0}$ extends to a continuous map $f:M\to Y$,  
covered by a continuous complex vector bundle isomorphism $\Theta:TX|_M\to f^*(TY)$
which agrees over $M\cap \Omega_0$ with the tangent map 
$T\Phi_0:TX|_{M\cap \,\Omega_0} \to (\Phi_0|_{M\cap \, \Omega_0})^* TY$.
\end{enumerate}
If $\dim X\ge 2\dim M+1$ then for any $\epsilon>0$ there are a Stein domain $\Omega \subset X$ 
containing $K\cup M$ and a biholomorphic map $\Phi:\Omega \stackrel{\cong}{\to} \Phi(\Omega) \subset Y$ 
such that 
\begin{enumerate}[\rm (i)]
\item $\sup_{x\in K}\dist(\Phi(x),\Phi_0(x)) <\epsilon$, where $\dist$ is a Riemannian distance function on $Y$,  
\item $\Phi(M)$ is a closed complex submanifold of $Y$, and
\item $\Phi|_M$ is homotopic to $f$ by a homotopy which is uniformly close to $f$ on $K\cap M$.
\end{enumerate}
If $\dim X=2\dim M$, there are a domain $\Omega\subset X$ as above and a holomorphic immersion 
$\Phi:\Omega\to Y$ satisfying conditions (i) and (iii) such that $\Phi|_M:M\to Y$ is proper. 
\end{theorem}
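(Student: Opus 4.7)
The plan is to imitate the proof of Theorem \ref{th:main} verbatim, replacing the target $\C^n$ by $Y$ and substituting for Theorem \ref{th:FR2014} its analogues for Stein manifolds with the density or volume density property, as given in \cite{AndristForstnericRitterWold2016} and \cite{Forstneric2019JAM}. The argument splits into three phases: a reduction, a global proper-map construction on $M$, and the tubular extension via Theorem \ref{th:tubular}.

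First I would reduce to the situation in which $X$ is Stein, $M$ is closed in $X$, $K$ is $\Oscr(X)$-convex, and $K$ has been enlarged slightly to a smoothly bounded strongly pseudoconvex domain $K_1 \Subset \Omega_0$, with $\Phi_0(K_1)$ still $\Oscr(Y)$-convex. The first three properties follow from Theorem \ref{th:Steinneighbourhoods}, and the last two from a minor variant of Lemma \ref{lem:admissible}: its proof only used a smooth nonnegative plurisubharmonic function on the target vanishing exactly on $\Phi_0(K)$ and strongly plurisubharmonic elsewhere, which exists on any Stein manifold $Y$ by \cite[Proposition 2.5.1]{Forstneric2017E}. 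Set $K' = K_1\cap M$, which is compact and $\Oscr(M)$-convex after a further small adjustment.

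Next, since $Y$ has the density (or volume density) property it is Oka, so by the Oka principle the continuous map $f:M\to Y$ can be homotoped, rel a neighbourhood of $K'$ and respecting the isomorphism $\Theta$ up to homotopy, to a holomorphic map which is uniformly close to $\Phi_0$ on $K'$. I would then apply the density-property analogue of Theorem \ref{th:FR2014} (\cite[Theorem 1.5]{AndristForstnericRitterWold2016}, together with the completion in \cite[Lemma 3.1]{Forstneric2019JAM} in the borderline dimension $2\dim M=\dim X$) with obstacle set $L=\Phi_0(K)$ and initial data this holomorphic approximation, obtaining a proper holomorphic immersion $F:M\to Y$ (an embedding if $\dim X\ge 2\dim M+1$) such that $F(M\setminus K')\subset Y\setminus L$ and $F$ is close to $\Phi_0$ on $K'$. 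An inspection of the construction, exactly as in the proof of Theorem \ref{th:main}, shows that the Cartan-pair extensions and Andersen--Lempert automorphisms used in \cite{AndristForstnericRitterWold2016,Forstneric2019JAM} can be controlled on a neighbourhood of the $\Oscr(Y)$-convex compact $\Phi_0(K_1)$, so $F$ simultaneously extends holomorphically to a neighbourhood of $K_1$ in $X$ and remains uniformly close to $\Phi_0$ there.

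Finally, the hypothesis $\dim X\ge 2\dim M$ together with the isomorphism $\Theta: TX|_M \to f^*TY$ from assumption (c) implies, via Corollary \ref{cor:stablenormabundle}, that the holomorphic normal bundles $\nu_{M,X}$ and $\nu_F$ are isomorphic, and that the isomorphism can be chosen to agree on $K_1\cap M$ with the one induced by $dF$. I would then invoke Theorem \ref{th:tubular} for the admissible pair $(K_1,M)$, with $\Phi_0$ replaced by $F$ and with the above normal-bundle isomorphism, obtaining the required injective holomorphic map (or immersion) $\Phi:\Omega\to Y$ on a Stein neighbourhood $\Omega\subset X$ of $K\cup M$. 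Conditions (i) and (ii) are immediate from Theorems \ref{th:FR2014}-type output and \ref{th:tubular}; for (iii) one observes that $\Phi|_M$ is homotopic to $F$ through the small correction produced in Theorem \ref{th:tubular}, while $F$ is already homotopic to $f$ by construction, and the composed homotopy stays close to $f$ on $K\cap M$ since every step is controlled there.

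The main obstacle I foresee is step two: arranging the proper map $F$ so that it approximates $\Phi_0$ not only on $K'\subset M$ but on an entire neighbourhood of $K_1$ in $X$, and such that it preserves the homotopy class of $f$ covered by $\Theta$. This compatibility hinges on running the Andersen--Lempert pushout construction of \cite{AndristForstnericRitterWold2016} with prescribed approximation on an ambient $\Oscr(X)$-convex compact rather than just on an $\Oscr(M)$-convex compact inside $M$; this is precisely where $\Oscr(Y)$-convexity of $\Phi_0(K)$ is used, and in the $\C^n$ case it is handled by the standard Oka-theoretic construction cited in the proof of Theorem \ref{th:main}. Once this is in hand, the normal-bundle matching and tubular extension are routine applications of the already-established Corollary \ref{cor:stablenormabundle} and Theorem \ref{th:tubular}.
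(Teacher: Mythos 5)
Your proposal is correct and follows exactly the route the paper intends: it reduces via (a variant of) Lemma \ref{lem:admissible}, replaces Theorem \ref{th:FR2014} by its density-property analogues from \cite{AndristForstnericRitterWold2016,Forstneric2019JAM} (with the Oka property of $Y$ handling the continuous data $f$ and $\Theta$ in hypothesis (c)), and then matches normal bundles via Corollary \ref{cor:stablenormabundle} and applies Theorem \ref{th:tubular}. The paper itself only sketches this argument and leaves the details to the reader, so your write-up is, if anything, more explicit than the original.
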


Further improvements of Theorems \ref{th:main} and  \ref{th:density}
are possible in view of the recent results of Kusakabe 
\cite[Theorem 1.2 and Corollary 1.3]{Kusakabe2020complements}
to the effect that the complement of any compact polynomially convex set in $\C^n$
for $n>1$ is an Oka manifold, and the analogous result holds for holomorphically
convex sets in any Stein manifold with the density or volume density property of dimension $n>1$.
(See also \cite{ForstnericWold2020MRL} for another proof of these results.) Together with 
the approximation and gluing constructions in Oka theory, this allows for an unprecedented 
control of the range of holomorphic maps.
Let us illustrate this phenomenon in the context of Theorem \ref{th:main}. We have the following result.

%
%   MAIN THEOREM WITH SCHLICHTNESS
%
\begin{theorem}\label{th:schlicht}
Assume that $X$ is a Stein manifold, $M$ is a closed complex submanifold of $X$
with $2\dim M+1\le n=\dim X$ such that $TX|_M$ is a trivial bundle, $K\subset X$ is a 
compact $\Oscr(X)$-convex set, $\Omega_0\subset X$ is an open neighbourhood of $K$,
and $\Phi_0:\Omega_0\to \Phi(\Omega_0)\subset \C^n$ is a biholomorphic map such that $\Phi_0(K)$ 
is polynomially convex. Then, there is a holomorphic map $\Phi:X\to\C^n$ which is injective holomorphic
on a neighbourhood of $M\cup K$ such hat $\Phi(M)$ is a closed complex submanifold of $\C^n$ and
\begin{equation}\label{eq:schlicht}
	\Phi(X\setminus K)  \subset \C^n\setminus \Phi(K).
\end{equation}
If $4\le 2\dim M=n=\dim X$ then there is a holomorphic map $\Phi:X\to\C^n$ which is an immersion
on a neighbourhood of $K\cup M$, it is proper on $M$, and it satisfies \eqref{eq:schlicht}.
\end{theorem}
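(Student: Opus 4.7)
My plan is to first apply Theorem \ref{th:main} itself on a neighborhood of $K \cup M$, producing a Stein neighborhood $\Omega \subset X$ of $K \cup M$ and a map $\Phi_1 : \Omega \to \C^n$ (biholomorphic when $2\dim M + 1 \le n$, and an immersion proper on $M$ with $\Phi_1(\Omega \setminus K) \subset \C^n \setminus \Phi_1(K)$ when $2\dim M = n$) that approximates $\Phi_0$ on $K$ and carries $M$ onto a closed submanifold of $\C^n$. I would then pick a compact $\Oscr(X)$-convex set $K_1$ with $K$ in its interior and $K_1 \subset \Omega$; by Lemma \ref{lem:admissible} applied in $\C^n$, together with the stability of polynomial convexity under small perturbations, $K_1$ can be arranged so that both $L := \Phi_1(K)$ and $L_1 := \Phi_1(K_1)$ are polynomially convex and $L$ lies in the interior of $L_1$. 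The decisive new ingredient is then Kusakabe's theorem \cite[Theorem 1.2]{Kusakabe2020complements}: since $n > 1$, the complement $Y := \C^n \setminus L$ is an Oka manifold.

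The problem now reduces to extending $\Phi_1$ from a neighborhood of $K_1 \cup M$ to a global holomorphic map $\Phi : X \to \C^n$ satisfying (i) $\Phi = \Phi_1$ on $M$ together with matching $1$-jet along $M$; (ii) $\Phi$ uniformly close to $\Phi_1$ on $K_1$; and (iii) $\Phi(X \setminus K_1) \subset Y$. Granting such a $\Phi$, the conclusions of the theorem follow: (i) makes $\Phi$ a local biholomorphism (respectively immersion) on a neighborhood of $M$ and preserves $\Phi(M) = \Phi_1(M)$ as a closed submanifold of $\C^n$; (ii) preserves biholomorphicity (respectively immersivity) on a neighborhood of $K$; the gluing of these two local conclusions on their overlap is supplied by Theorem \ref{Acta:theorem4.1}; properness of $\Phi|_M = \Phi_1|_M$ is inherited from Theorem \ref{th:main}; and (iii) together with the bound $\Phi(K_1 \setminus K) \cap L = \varnothing$ (which follows from (ii) and the injectivity of $\Phi_1$ on $\Omega$) yields $\Phi(X \setminus K) \subset \C^n \setminus \Phi(K)$.

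To build such $\Phi$, I would exhaust $X$ by an increasing sequence of compact $\Oscr(X)$-convex sets $K_1 \subset K_2 \subset \cdots \to X$ and extend across each consecutive pair using the standard Oka-theoretic scheme of alternating noncritical bumps and critical handle attachments developed in Section \ref{sec:proof-tubular}. The main new technical point, and the chief obstacle compared to the proof of Theorem \ref{th:tubular}, is that at each step the extension must land in $Y = \C^n \setminus L$ on the portion of the bump lying outside $K_1$; this is precisely what the Oka property of $Y$ delivered by Kusakabe provides, and the required compatibility of boundary data is automatic because $\Phi_1((K_1 \cup M) \setminus K) \subset Y$ by injectivity of $\Phi_1$ on $\Omega$. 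Jet matching along $M$ and the passage from local to global extensions are handled as in the proof of Theorem \ref{th:tubular}, via Lemma \ref{lem:linearsplitting} and the nonlinear splitting Theorem \ref{Acta:theorem4.1}.
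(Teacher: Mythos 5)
Your overall route is the same as the paper's: apply Theorem \ref{th:main} to obtain a map $\Phi_1$, injective near $K\cup M$, with $L=\Phi_1(K)$ polynomially convex by stability of polynomial convexity (after enlarging $K$ via Lemma \ref{lem:admissible}); invoke Kusakabe's theorem that $\C^n\setminus L$ is Oka; and then extend stepwise over an exhaustion of $X$ by the Oka-theoretic bump/handle scheme of Section \ref{sec:proof-tubular}, interpolating on $M$, approximating on a neighbourhood of $K$, and forcing the image of each added piece into the Oka domain. This is exactly the strategy the paper sketches, and your proposal even supplies some of the bookkeeping the paper leaves implicit.

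There is, however, one step that fails as written: your derivation of \eqref{eq:schlicht}. You claim that $\Phi(K_1\setminus K)\cap L=\varnothing$ follows from the uniform estimate (ii) on $K_1$ together with the injectivity of $\Phi_1$ on $\Omega$. It does not: $\Phi_1(K_1\setminus K)$ is disjoint from $L=\Phi_1(K)$ but accumulates on $bL$ (points of $K_1$ just outside $K$ are sent just outside $L$), so no uniform $\epsilon$-approximation can keep their images out of $L$. Moreover, \eqref{eq:schlicht} concerns $\Phi(K)$, not $L$; since $\Phi$ only approximates $\Phi_1$ on $K$, the set $\Phi(K)$ need not be contained in $L$, so your condition (iii), namely $\Phi(X\setminus K_1)\subset\C^n\setminus L$, does not by itself give disjointness from $\Phi(K)$ either. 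The repair is standard and is presumably what the paper intends: on a neighbourhood $U$ of $K$ on which $\Phi$ is injective, the inclusion $\Phi(U\setminus K)\subset\C^n\setminus\Phi(K)$ is automatic; outside such a neighbourhood one should force the images of the added pieces to avoid not $L$ itself but a slightly larger compact polynomially convex set $L'=\{\rho\le c\}\supset L$ (with $\rho$ a plurisubharmonic defining function for $L$ as in the proof of Lemma \ref{lem:admissible}), chosen so that $\Phi(K)$ lies in the interior of $L'$ once the approximation on $K$ is fine enough; the complement $\C^n\setminus L'$ is again Oka by Kusakabe, so the same stepwise construction applies. Relatedly, matching $\Phi=\Phi_1$ to first order along $M$ only makes $\Phi$ a local biholomorphism at points of $M$; injectivity on a neighbourhood of the noncompact submanifold $M$ and near $K$ must be maintained step by step in the gluing process (as in the proof of Theorem \ref{th:tubular}), not extracted afterwards from Theorem \ref{Acta:theorem4.1}, which is a splitting statement rather than a device for merging injectivity conclusions.
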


The addition to Theorem \ref{th:main} is \eqref{eq:schlicht}, which says that
$\Phi:X\to \C^n$ covers the set $\Phi(K) \subset \C^n$ in a schlicht way. 
The analogous result holds in the context of Theorem \ref{th:density}.

Here is the idea of proof. By Lemma \ref{lem:admissible} we may enlarge $K$ slightly so that
it is the closure of a strongly pseudoconvex domain and $\Phi_0(K)$ is polynomially convex. 
By Theorem \ref{th:main} there are a neighbourhood $\Omega\subset X$ of $K\cup M$
and a biholomorphic map $\Phi:\Omega\to \Phi(\Omega) \subset \C^n$ such that 
$\Phi|_M:M\to\C^n$ is a proper holomorphic embedding, and $\Phi$ approximates $\Phi_0$
as closely as desired on a neighbourhood of $K$. Assuming that the approximation is close enough, the set 
$L=\Phi(K)\subset \C^n$ is also polynomially convex.
By using Kusakabe's theorem that $\C^n\setminus L$ is an Oka manifold,  we can interpolate $\Phi$ on $M$ 
and approximate it as closely as desired on a neighbourhood of $K$ by a holomorphic map $X\to\C^n$, 
still denoted $\Phi$, which also satisfies the schlichtness condition \eqref{eq:schlicht}. 
To do this, we perform a stepwise extension of $\Phi$ to larger domains by using methods of Oka theory 
(cf.\ the proof of \cite[Theorem 5.4.4]{Forstneric2017E}) so that the image of each added piece 
of the domain of the map lies in the Oka domain $\C^n\setminus L$. The idea is similar to the proof the 
Oka principle for removal of intersections, given by \cite[Theorem 7.2.1]{Forstneric2017E}.

An even stronger conclusion can be made if the map $\Phi$ in Theorem \ref{th:main} can be chosen
such that $A:=\Phi(M) \subset\C^n$ is an algebraic submanifold of $\C^n$.
Our assumptions imply that $\dim A\le n-2$. The following result is an immediate corollary to 
\cite[Theorem 1.6]{Kusakabe2020complements} and the general description of an 
algebraic subvariety of $\C^n$ by using proper projections onto linear subspaces 
of dimension $\dim A$ (see Chirka \cite[Theorem 2, p.\ 77]{Chirka1989}).

%
%  OKA COMPLEMENTS
%
\begin{corollary}
If $A\subset \C^n$ is a closed algebraic subvariety with $\dim A\le n-2$ and $L$
is a compact polynomially convex set in $\C^n$, then $\C^n\setminus (A\cup K)$ is an Oka domain. 
Furthermore, $A\cup L$ has a basis of open Stein neighbourhoods $V \subset \C^n$ such that 
$\C^n\setminus \overline V$ is Oka.
\end{corollary}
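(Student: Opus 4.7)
The plan is to deduce the corollary by combining Kusakabe's Theorem 1.6 (complements of polynomially convex sets in $\C^n$, $n\ge 2$, are Oka) with Chirka's structural theorem for algebraic subvarieties, plus a plurisubharmonic thickening for the neighbourhood basis.

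The central step is to verify that $A\cup L$ is polynomially convex as a closed subset of $\C^n$, meaning $\wh K\subset A\cup L$ for every compact $K\subset A\cup L$. Fix $p\in \C^n\setminus(A\cup L)$. Since $L$ is polynomially convex and $p\notin L$, for every $\epsilon>0$ there is a polynomial $f$ with $f(p)=1$ and $|f|<\epsilon$ on $L$. Since $A$ is algebraic and $p\notin A$, the Nullstellensatz yields a polynomial $g$ vanishing on $A$ with $g(p)\ne 0$. Setting $h=f^N g$, one has $h\equiv 0$ on $K\cap A$, $|h|\le \epsilon^N \max_L|g|$ on $K\cap L$, and $|h(p)|=|g(p)|>0$. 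Choosing $N$ large gives $\max_K|h|<|h(p)|$, so $p\notin \wh K$.

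With $A\cup L$ polynomially convex as a closed set, the Oka property of $\C^n\setminus(A\cup L)$ would follow from Kusakabe's Theorem 1.6. The passage from the compact case to the closed case is where Chirka's theorem enters: after a linear change of coordinates, write $\C^n=\C^k\times\C^{n-k}$ with $k=\dim A\le n-2$, so that the first projection $\pi:\C^n\to\C^k$ is proper on $A$. Then each $A_R:=A\cap\pi^{-1}(\overline{\B^k_R})$ is compact and the polynomial-convexity argument above combined with the estimate $\widehat{A_R\cup L}\subset \{|\pi|\le R\}\cap(A\cup L)=A_R\cup L$ (valid once $R\ge \sup_L|\pi|$) shows $A_R\cup L$ is polynomially convex in $\C^n$, so $\C^n\setminus(A_R\cup L)$ is Oka for such $R$. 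The main obstacle is passing from this compact exhaustion to the limit: one must either invoke that Kusakabe's proof already covers the closed case, or use $\pi$ to build a parametrized family of dominating sprays on the $\C^n\setminus(A_R\cup L)$ that descends as $R\to\infty$ to a dominating spray on $\C^n\setminus(A\cup L)$, yielding Gromov ellipticity and hence the Oka property of the limit.

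For the second statement, combine \cite[Proposition 2.5.1]{Forstneric2017E} applied to $A$ and to $L$ separately (via the pointwise maximum) to construct a smooth plurisubharmonic function $\rho\ge 0$ on $\C^n$ with $\rho^{-1}(0)=A\cup L$ and $\rho$ strongly plurisubharmonic off $A\cup L$. For each regular value $c>0$ the sublevel set $V_c=\{\rho<c\}$ is an open Stein neighbourhood of $A\cup L$, and the $V_c$ form a basis as $c\to 0^+$. Choosing $\rho$ so that small sublevel sets of the $L$-component are polynomially convex, $\overline{V_c}=A\cup L_c$ with $L_c$ a compact polynomially convex neighbourhood of $L$, and the first part of the corollary applied to $A$ and $L_c$ yields that $\C^n\setminus\overline{V_c}$ is Oka.
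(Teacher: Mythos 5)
Your polynomial-convexity computations are fine: the $f^Ng$ trick does show that every compact subset of $A\cup L$ has its polynomial hull inside $A\cup L$, and hence that the truncations $A_R\cup L$ are polynomially convex. But this is not the crux, and the step you yourself flag as ``the main obstacle'' is a genuine gap. The domains $\C^n\setminus(A_R\cup L)$ \emph{decrease} as $R\to\infty$, and there is no general principle by which a decreasing intersection (or any other limit) of Oka domains is again Oka; neither of your proposed fixes is substantiated. The compact-set theorem you invoke (the complement of a compact polynomially convex set in $\C^n$, $n\ge 2$, is Oka) is Kusakabe's Theorem 1.2/Corollary 1.3, not his Theorem 1.6, and it does not cover closed unbounded sets; and ``descending a parametrized family of dominating sprays as $R\to\infty$'' is exactly the hard analytic content you would have to produce, not an available device. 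The paper's route avoids any limit argument: Kusakabe's Theorem 1.6 concerns complements of \emph{closed, possibly unbounded} sets admitting a proper linear projection with polynomially convex fibres, and Chirka's theorem supplies linear coordinates in which the projection $\pi\colon\C^n\to\C^{\dim A}$ is proper on $A$. Then $\pi|_{A\cup L}$ is proper and each fibre of $A\cup L$ is a finite set united with an affine slice of $L$, hence polynomially convex, so the first assertion is immediate from that theorem; your argument only reproves the (known) compact case and leaves the actual statement unproved.

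The ``furthermore'' part of your proposal also breaks down at three points: (i) the pointwise \emph{maximum} of nonnegative plurisubharmonic functions for $A$ and for $L$ vanishes on $A\cap L$, not on $A\cup L$ (the minimum or the product would have the right zero set, but neither is plurisubharmonic in general); (ii) even granting a suitable $\rho\ge 0$ with $\rho^{-1}(0)=A\cup L$, the sublevel sets $V_c=\{\rho<c\}$ of a single fixed function cannot form a neighbourhood basis of the unbounded set $A\cup L$, since a neighbourhood which pinches toward $A$ at infinity contains no sublevel set; and (iii) $\overline{V_c}$ is a closed tube around all of $A$, not a set of the form $A\cup L_c$ with $L_c$ compact, so the first part of the corollary cannot be applied to its complement. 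The correct mechanism is again the unbounded, fibrewise statement: one applies Kusakabe's Theorem 1.6 to closed neighbourhoods consisting of a tube around $A$ with polynomially convex fibres over the Chirka projection together with a compact polynomially convex neighbourhood of $L$, and obtains Steinness of the neighbourhoods from a Stein neighbourhood theorem for such configurations rather than from sublevel sets of one global function.
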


By using this result, we can choose the extension $\Phi:X\to \C^n$ in Theorem \ref{th:schlicht} 
so that it covers a neighbourhood of $\Phi(M\cup K)=A\cup L$ in a schlicht way:
\[
	\Phi(X\setminus (M\cup K)) \subset \C^n\setminus (A\cup L). 
\]

%
%   ACKNOWLEDGMENTS
%
\subsection*{Acknowledgements}
The author is supported in part by the research program P1-0291 and grant J1-9104
from ARRS, Republic of Slovenia. He wishes to thank the referee for a careful reading 
of the paper and for catching several misprints.

%%%%%%%%%%
%%%%%%%%%%
%%%%%%%%%%
%%%%%%%%%%   THE BIBLIOGRAPHY
%%%%%%%%%%
%%%%%%%%%%

%{\bibliographystyle{abbrv} \bibliography{references}} 
%\begin{comment}

%\end{comment}

%%%%%%%%%%
%%%%%%%%%%
%%%%%%%%%%
%%%%%%%%%%   AFFILIATIONS
%%%%%%%%%%
%%%%%%%%%%

%\newpage

\vspace*{5mm}
\noindent Franc Forstneri\v c

\noindent Faculty of Mathematics and Physics, University of Ljubljana, Jadranska 19, SI--1000 Ljubljana, Slovenia, and 

\noindent 
Institute of Mathematics, Physics and Mechanics, Jadranska 19, SI--1000 Ljubljana, Slovenia

\noindent e-mail: {\tt franc.forstneric@fmf.uni-lj.si}

\end{document}